\newtheorem{remark}[theorem]{Remark}
\newcommand{\N}{\mathbb{N}}
\newcommand{\R}{\mathbb{R}}
\newcommand{\C}{\mathbb{C}}
\newcommand{\dx}[1][x]{\ensuremath{\,{\rm{d}} #1}}
\begin{document}

\title{Monotonicity-based inversion of the fractional Schrödinger equation I. Positive potentials}

\author{Bastian Harrach\footnotemark[2], and Yi-Hsuan Lin\footnotemark[3]}
\renewcommand{\thefootnote}{\fnsymbol{footnote}}

\footnotetext[2]{Institute for Mathematics, Goethe-University Frankfurt, Frankfurt am Main, 
Germany (harrach@math.uni-frankfurt.de)}

\footnotetext[3]{Institute for Advanced Study, The Hong Kong University Science and Technology,
Hong Kong (yihsuanlin3@gmail.com)}

\maketitle

\let\thefootnote\relax\footnotetext{\hrule \vspace{1ex} \centering This is a preprint version of a journal article published in\\
 \emph{SIAM J. Math. Anal.} \textbf{51}(4), 3092--3111, 2019
(\url{https://doi.org/10.1137/18M1166298}).
}

\begin{abstract}
We consider an inverse problem for the fractional Schr\"{o}dinger equation by using monotonicity formulas. 
We provide if-and-only-if monotonicity relations between positive bounded potentials and their associated nonlocal Dirichlet-to-Neumann maps. 
Based on the monotonicity relation, we can prove uniqueness for the nonlocal Calder\'{o}n problem in a constructive manner.
Secondly, we offer a reconstruction method for an unknown obstacles in a given domain. 
Our method is independent of the dimension and only requires the background solution of 
the fractional Schr\"{o}dinger equation.
\end{abstract}

\begin{keywords}
Fractional Schrödinger equation, monotonicity method, inverse obstacle problem, shape reconstruction, localized potentials,
Calderón's problem, Runge approximation property
\end{keywords}

\begin{AMS}
35R30 
\end{AMS}

\section{Introduction}
In this article we will give a constructive uniqueness result for the Calder\'{o}n problem for the nonlocal fractional Schr\"{o}dinger equation
and develop a shape reconstruction method to determine unknown obstacles in a given domain. 
Let
$\Omega$ be a bounded open set in $\mathbb{R}^{n}$
with $n\in \N$, and $q\in L^{\infty}_+(\Omega)$ be a potential, where $L^\infty _+(\Omega)$ consists of all $L^\infty (\Omega )$-functions with positive essential infima.
For $s\in(0,1)$, the (nonlocal) Dirichlet problem for the fractional Schrödinger equation is given by 
\begin{equation}
\begin{cases}
(-\Delta)^{s}u+qu=0 & \mbox{ in }\Omega,\\
u=F & \mbox{ in }\Omega_{e}:=\mathbb{R}^n\setminus \overline{\Omega},
\end{cases}\label{Fractional Schrodinger equation}
\end{equation}
Note that $(-\Delta)^{s}$ is a nonlocal operator as $s\in(0,1)$, so that the Dirichlet data is prescribed on the whole complement of $\Omega$ and not only on its boundary $\partial \Omega$.

The Dirichlet-to-Neumann (DtN) operator of \eqref{Fractional Schrodinger equation} 
\[
\Lambda(q):\ H(\Omega_{e})\to H(\Omega_{e})^{*}
\]
is formally given by 
\begin{equation}
\label{DtN map}
\Lambda(q)F:=(-\Delta)^{s}u|_{\Omega_{e}}, \quad \text{ where $u\in H^s(\mathbb R^n)$ solves \eqref{Fractional Schrodinger equation}}.
\end{equation}
The precise definition of $(-\Delta)^s$, the DtN map $\Lambda(q)$, and the function spaces are given in Section \ref{Section 2}.
For further properties for the nonlocal DtN maps, we also refer readers to \cite{ghosh2016calder}.

The nonlocal fractional Laplacian operator has received considerable attention for its ability to model
anomalous stochastic diffusion problems including jumps and longdistance interactions, cf., e.g., \cite{bucur2016nonlocal,ros2016nonlocal}, and the extensive list of references to applications in the introduction of \cite{di2012hitchhiks}.
Accordingly, inverse problems for the fractional Laplacian operator appear when an imaging domain is being probed by an anomalous diffusion process. The fractional diffusion model is more complicated than in the standard Brownian motion case modeled by the standard Laplacian ($s=1$).
However, recent works on inverse problems for nonlocal equations (see the references below) indicate that the inverse problem actually becomes easier to solve than in the standard Laplacian case. The present work contributes to this by showing that monotonicity-based reconstruction methods that have been developed for standard diffusion processes can also be applied to the fractional diffusion case and that the methods even become simpler and more powerful. 
 
For a list of recent works on inverse problems for nonlocal equations let us refer to  \cite{cao2017simultaneously,cao2018determining,cekic2018calder,covi2018inverse,ghosh2017calder,lai2018global}, and the review of Salo \cite{salo2017fractional}. Stability questions for the fractional Calderón problem were studied in \cite{ruland2019fractional,ruland2018exponential,ruland2018lipschitz}. Let us point out that the Calder\'on problem for the fractional Schrödinger equation was first solved by Ghosh, Salo and Uhlmann \cite{ghosh2016calder}, who showed the global uniqueness result that $\Lambda(q_1)=\Lambda(q_2)$ implies $q_1=q_2$. Remarkably, it was recently shown that uniqueness in the fractional Calderón problem already holds with a single measurement and with data on arbitrary, possibly disjoint subsets of the exterior, cf.~Ghosh, R{\"u}land, Salo, and Uhlmann \cite{ghosh2018uniqueness}. 

Uniqueness proofs for the fractional Calder\'on problem strongly rely on a \textit{strong uniqueness property},
cf.\ \cite[Theorem 1.2]{ghosh2016calder} for the fractional Schrödinger equation and \cite[Theorem 1.2]{ghosh2017calder}
for the nonlocal variable case. The strong uniqueness property states that if
$u=(-\Delta)^{s}u=0$ (for $0<s<1$) in an arbitrary open set in $\mathbb{R}^{n}$,
then $u\equiv0$ in the entire space $\mathbb{R}^{n}$ for any $n\in \N$. Note that this property is no longer true for the standard (local) Laplacian case, i.e.,
for the case $s=1$. In fact, via
this property, one can also derive a nonlocal Runge type approximation property (see \cite[Theorem 1.3]{ghosh2016calder} or Theorem
\ref{thm:runge}), which states that an arbitrary
$L^{2}$ function can be well approximated by solutions of the fractional
Schrödinger equation. Recently, Rüland and Salo \cite{ruland2019fractional}
studied the fractional Calderón problem under lower regularity assumptions and the stability results for the potentials.

The first effort of this paper is to prove uniqueness for the Calder\'{o}n problem in a constructive way. We will derive the following \textit{monotonicity formula} in Theorem \ref{Theorem equivalent monotonicity}: Let $q_0, q_1\in L^\infty _+(\Omega )$, then 
\begin{equation}\label{intro_monotonicity}
q_{1}\leq q_{0}\quad \mbox{ if and only if  } \quad \Lambda(q_1)\leq\Lambda(q_0),
\end{equation}
where $q_1\leq q_0$ is to be understood pointwise almost everywhere in $\Omega$, and 
$\Lambda(q_1)\leq\Lambda(q_0)$ is to be understood in the sense of definiteness of quadratic forms (also known as Loewner order),
cf.\ \eqref{quadratic sense} in Section \ref{Section 3}.
Similar monotonicity relations have been widely applied in the study of inverse problems, see \cite{harrach2013monotonicity,tamburrino2002new} for the origins
of the monotonicity method combined with the method of localized potentials 
\cite{tamburrino2002new,harrach2009uniqueness,harrach2010exact,harrach2012simultaneous,arnold2013unique,harrach2013monotonicity,harrach2015combining,barth2017detecting,harrach2015resolution,harrach2016enhancing,maffucci2016novel,tamburrino2016monotonicity,
garde2017comparison,garde2017convergence,harrach2017local,su2017monotonicity,ventre2017design,brander2018monotonicity,griesmaier2018monotonicity,harrach2018helmholtz,harrach2018localizing,harrach2018monotonicity,harrach2019uniqueness,seo2018learning,zhou2018monotonicity,garde2019regularized,harrach2019dimension}
for a list of recent and related works. Also note that similar arguments involving monotonicity conditions and blow-up arguments have been used in various ways in the study of inverse problems, see, e.g., \cite{alessandrini1990,ikehata1999identification,isakov1988,kohn1984determining,kohn1985determining}.

The monotonicity relation \eqref{intro_monotonicity} immediately implies a constructive global uniqueness result for the Calder\'on problem, which is our first main result in this paper, cf.\ Corollary \ref{cor:Calderon}: Any $q\in L_+^{\infty}(\Omega)$ is uniquely determined by $\Lambda(q)$ by
the following formula: For $x\in \Omega$ a.e., 
\begin{equation}
\label{eq:inf}
q(x)=\sup\left\{ \psi(x):\mbox{ $\psi$ positive (density one) simple function, } \Lambda(\psi)\leq\Lambda(q)\right\}.
\end{equation}
This shows  that one can recover an unknown potential with positive infimum by comparing the DtN map with that of simple functions. 

The second main result of this paper is on the shape reconstruction (or inclusion detection) problem for the fractional Schrödinger equation.
Let $q_0\in L^\infty_+(\Omega)$ denote a known reference coefficient, and $q_1\in L^\infty_+(\Omega)$
denote an unknown coefficient function that differs from the reference value $q_0$ in certain regions. 
We aim to find these anomalous regions (or scatterers), i.e., the support of $q_1-q_0$, from the difference of
the Dirichlet-to-Neumann-operators $\Lambda(q_1)-\Lambda(q_0)$. We will prove that this can be done \emph{without} solving the fractional Schrödinger equation
for potentials other than the reference potentials $q_0$. More precisely, we will show in Theorem \ref{thm:support_from_closed_sets} that
\begin{align}\label{support_C}
\begin{cases}
\mathrm{supp}(q_1-q_0)&=\bigcap \{ C\subseteq \Omega \text{ closed}:\ \\
& \qquad \quad \exists \alpha>0:\ 
-\alpha \mathcal{T}_C \leq \Lambda(q_1)-\Lambda(q_0)\leq  \alpha \mathcal{T}_C\},
\end{cases}
\end{align}
where $\mathcal{T}_C:=\Lambda'(q_0)\chi_C$, and $\Lambda'(q_0)$ is the Fr\'echet derivative of the DtN operator $\Lambda(q)$. 
The test operator $\mathcal{T}_C$ can be easily calculated from knowledge of the solution of the fractional Schrödinger equation with reference potentials $q_0$.
Under the additional definiteness condition that either $q_1\geq q_0$ or $q_1\leq q_0$ holds almost everywhere we will also show
that the inner support of $q_1-q_0$ fulfills 
\begin{align}\label{support_B1}
\mathrm{inn\,supp}(q_1-q_0)=
\bigcup \{B\subseteq \Omega \text{ open ball}:\ \exists \alpha>0: \Lambda(q_1) \leq \Lambda(q_0)-\alpha\mathcal{T}_B\},
\end{align}
resp., 
\begin{align}\label{support_B2}
\mathrm{inn\,supp}(q_1-q_0)=\bigcup \{B\subseteq \Omega \text{ open ball}:\ \exists \alpha>0: \Lambda(q_1) \geq \Lambda(q_0)+\alpha\mathcal{T}_B\},
\end{align}
cf.\ Theorem \ref{thm:support_from_open_balls}.

Inverse shape reconstruction problems were intensively studied in the literature, see \cite{isakov2006inverse,nakamura2015inverse} for
the comprehensive introduction and survey. There are several inclusion
detection methods, including the enclosure method, the linear sampling
method, the probe method and the factorization method, which have
been proposed to solve the inclusion detection inverse problem. These
methods strongly rely on special solutions of certain differential
equations. For example, the special solutions include the complex
geometrical optics (CGO) solution, the oscillating decaying (OD) solution
and the Wolff solution. In \cite{KS2014,KSU2011,nakamura2007identification,SW2006,SY2012reconstruction,UW2008}.
The authors used the CGO solutions to solve the inverse obstacle problems
for different mathematical models {for the isotropic problems}. However,
for the general anisotropic medium, we need to utilize more complicated
special solutions such as the OD solutions, see \cite{kuan2015enclosure,lin2014reconstruction,NUW2005(ODS),NUW2006}.
The Wolff solutions are used to solve the inverse obstacle problem
for the p-Laplace equation, see \cite{brander2015enclosure,brander2018monotonicity}.
Our monotonicity-based approach does not require to construct any special solutions to practically determine 
the inclusions via the formulas \eqref{support_C}--\eqref{support_B2}. The proof
of these formulas however relies on so-called \emph{localized potentials} \cite{gebauer2008localized} , i.e., solutions of
the fractional Schrödinger equations with very large energy on a subset of $\Omega$ and very low energy elsewhere, see
Corollary \ref{cor:localized_potentials}.

The structure of this article is given as follows. In Section \ref{Section 2}, we
provide basic reviews for the fractional Sobolev spaces, the fractional
Schrödinger equation and the nonlocal DtN map. In Section \ref{Section 3},
we demonstrate the monotonicity formulas and construct the localized potentials for our the fractional Schr\"{o}dinger equation. In Section \ref{section:calderon}, we show the converse results of the monotonicity relations, which gives if-and-only-if relations between the DtN maps and positive potentials. In addition, we provide a constructive global uniqueness proof for the Calder\'on problem by proving \eqref{eq:inf}. Finally, in Section \ref{Section 5}, we characterize the linearized nonlocal DtN map and derive the inclusion detection formulas \eqref{support_C}--\eqref{support_B2}.


\section{The Dirichlet-to-Neumann operator for the fractional Schrödinger
equation\label{Section 2}}

In this section, we briefly summarize some fundamental definitions
and notations on the fractional Schrödinger equation and the associated DtN operator. For $n\in\mathbb{N}$, we denote by 
\[
\mathscr{F},\ \mathscr{F}^{-1}:\ L^2(\mathbb{R}^{n};\mathbb{C})\to L^2(\mathbb{R}^{n};\mathbb{C})
\]
the Fourier transform and its inverse on the space of complex-valued $L^2$-functions, and let
$\mathcal{S}(\R^n;\C)$ be the Schwartz space of rapidly decreasing complex-valued functions.

For $s\in(0,1)$, the fractional Laplacian is defined by 
\[
(-\Delta)^{s}:\ \mathcal{S}(\mathbb{R}^{n};\mathbb{C})\to L^2(\mathbb{R}^{n};\mathbb{C}),\quad(-\Delta)^{s}u:=\mathscr{F}^{-1}\left(|\xi|^{2s}\mathscr{F}(u)\right).
\]
The fractional Laplacian can be extended to an operator 
\[
(-\Delta)^{s}:\ L^2(\mathbb{R}^{n};\mathbb{C})\to \mathcal{S}'(\mathbb{R}^{n};\mathbb{C})
\]
by setting 
\[
\langle (-\Delta)^{s}u, \varphi\rangle_{\mathcal{S}'\times \mathcal{S}}:=\langle u, (-\Delta)^{s} \varphi\rangle_{L^2} \quad \text{ for all }
u\in L^2(\mathbb{R}^{n};\mathbb{C}),\ \varphi\in \mathcal{S}(\mathbb{R}^{n};\mathbb{C}),
\]
and it can be shown that $(-\Delta)^{s}u$ will be real-valued for real-valued $u$ (see \cite{di2012hitchhiks,kwasnicki2017ten,stein2016singular}).
Hence, in the following we will always consider the fractional Laplacian as an operator
\[
(-\Delta)^{s}:\ L^2(\mathbb{R}^{n})\to \mathcal{S}'(\mathbb{R}^{n}),
\]
and all function spaces in this work are real-valued unless indicated otherwise.

For $0<s<1$, the $L^{2}$-based fractional Sobolev space is defined
by 
\[
H^{s}(\mathbb{R}^{n}):=\{u\in L^{2}(\mathbb{R}^{n}):\ (-\Delta)^{s/2}u\in L^{2}(\mathbb{R}^{n})\}
\]
and equipped with the scalar product 
\[
\left(u,v\right)_{H^{s}(\mathbb{R}^{n})}:=\int_{\mathbb{R}^{n}}\left((-\Delta)^{s/2}u\cdot(-\Delta)^{s/2}v+uv\right)\dx
\quad \text{ for all }u,v\in H^{s}(\mathbb{R}^{n}).
\]
It can be shown that $H^{s}(\mathbb{R}^{n})$ is a Hilbert space (see
\cite{di2012hitchhiks} for instance). Also note that $H^{s}(\mathbb{R}^{n})$
obviously contains the rapidly decreasing Schwartz functions $\mathcal{S}(\mathbb{R}^{n})$,
and a fortiori all compactly supported $C^{\infty}$-functions.

For an open set $\Omega\subseteq\mathbb{R}^{n}$ we define 
\[
H_{0}^{s}(\Omega):=\text{closure of \ensuremath{C_{c}^{\infty}(\Omega)} in \ensuremath{H^{s}(\mathbb{R}^{n})}}.
\]
Note that this space is sometimes denoted as $\widetilde{H}^{s}(\Omega)$
in the literature, but in the context of Dirichlet and Neumann boundary
value problems it seems more natural to denote this space by $H_{0}^{s}(\Omega)$.

We also define the bilinear form 
\begin{equation}
\mathscr{B}_{q}(u,w):=\int_{\mathbb{R}^{n}}(-\Delta)^{s/2}u\cdot(-\Delta)^{s/2}w\dx+\int_{\Omega}quw\,\dx,\label{bilinear form}
\end{equation}
for any $u,w\in H^{s}(\mathbb{R}^{n})$. We then have the following
variational formulation for the fractional Schrödinger equation.
\begin{lemma}
\label{lemma:equivalence} Let $q\in L^{\infty}(\Omega)$ and $f\in L^{2}(\Omega)$.
$u\in H^{s}(\mathbb{R}^{n})$ solves (in the sense of distributions)
\[
(-\Delta)^{s}u+qu=f\quad\text{ in \ensuremath{\Omega}}
\]
if and only if $u\in H^{s}(\mathbb{R}^{n})$ satisfies 
\[
\mathscr{B}_{q}(u,w)=\int_{\Omega}fw\dx \quad\text{ for all }w\in H_{0}^{s}(\Omega).
\]
\end{lemma}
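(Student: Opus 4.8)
The plan is to prove the two implications separately, both by unwinding the distributional definition of $(-\Delta)^s u + qu = f$ and comparing it with the bilinear form $\mathscr{B}_q$. First I would recall that, by the definition of the extended fractional Laplacian, $u \in H^s(\mathbb{R}^n)$ solves $(-\Delta)^s u + qu = f$ in $\Omega$ in the sense of distributions precisely when
\[
\langle u, (-\Delta)^s \varphi \rangle_{L^2} + \int_\Omega q u \varphi \dx = \int_\Omega f\varphi \dx \quad \text{ for all } \varphi \in C_c^\infty(\Omega).
\]
The crucial identity to establish is that for $u \in H^s(\mathbb{R}^n)$ and $\varphi \in C_c^\infty(\Omega) \subseteq \mathcal{S}(\mathbb{R}^n)$ one has
\[
\langle u, (-\Delta)^s \varphi \rangle_{L^2} = \int_{\mathbb{R}^n} (-\Delta)^{s/2} u \cdot (-\Delta)^{s/2}\varphi \dx,
\]
i.e., the "integration by parts'' formula that splits $(-\Delta)^s$ into two half-Laplacians. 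This follows from Plancherel's theorem: both sides equal $\int_{\mathbb{R}^n} |\xi|^{2s} \widehat{u}(\xi)\overline{\widehat{\varphi}(\xi)}\,d\xi$ (working with real-valued functions so conjugation is harmless), using that $|\xi|^{2s}\widehat\varphi \in L^2$ since $\varphi$ is Schwartz and that $|\xi|^s \widehat u \in L^2$ by definition of $H^s(\mathbb{R}^n)$. With this identity, the distributional equation becomes exactly $\mathscr{B}_q(u,\varphi) = \int_\Omega f\varphi \dx$ for all $\varphi \in C_c^\infty(\Omega)$.

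The remaining step is a density argument. The bilinear form $\mathscr{B}_q$ is continuous on $H^s(\mathbb{R}^n) \times H^s(\mathbb{R}^n)$ — the first term is bounded by $\|u\|_{H^s}\|w\|_{H^s}$ and the second by $\|q\|_{L^\infty(\Omega)}\|u\|_{L^2}\|w\|_{L^2}$ — and the right-hand side $w \mapsto \int_\Omega f w \dx$ is a continuous linear functional on $H^s(\mathbb{R}^n)$ since $f \in L^2(\Omega)$. Since $H_0^s(\Omega)$ is by definition the $H^s(\mathbb{R}^n)$-closure of $C_c^\infty(\Omega)$, the equality $\mathscr{B}_q(u,w) = \int_\Omega fw\dx$ extends from $w \in C_c^\infty(\Omega)$ to all $w \in H_0^s(\Omega)$, giving the forward implication. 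Conversely, if the variational identity holds for all $w \in H_0^s(\Omega)$, in particular it holds for all $w = \varphi \in C_c^\infty(\Omega)$, and reversing the Plancherel identity above recovers the distributional equation.

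I do not expect any serious obstacle here; this is a standard variational reformulation. The one point requiring a little care is the justification of the Plancherel/integration-by-parts identity at the stated regularity — one must make sure that $(-\Delta)^{s/2}u$ is genuinely an $L^2$ function (true by the definition of $H^s(\mathbb{R}^n)$ used in the paper) and that pairing it in $L^2$ against $(-\Delta)^{s/2}\varphi$ coincides with the distributional pairing $\langle u, (-\Delta)^s\varphi\rangle$; this is where one invokes that $(-\Delta)^{s/2}$ is (formally) self-adjoint on the Fourier side and that all the integrals in question are absolutely convergent because $\varphi \in \mathcal{S}(\mathbb{R}^n)$. Everything else is continuity of bilinear forms plus the definition of $H_0^s(\Omega)$ as a closure.
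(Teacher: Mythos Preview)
Your proposal is correct and follows essentially the same approach as the paper: compute the distributional pairing $\langle(-\Delta)^{s}u+qu-f,\psi\rangle$ for $\psi\in C_c^\infty(\Omega)$, identify it with $\mathscr{B}_q(u,\psi)-\int_\Omega f\psi\dx$ via the Fourier-side splitting of $(-\Delta)^s$, and then extend by continuity and density to all $w\in H_0^s(\Omega)$. The paper's proof is simply a terser version of exactly this argument.
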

\begin{proof}
Note that for $u\in H^{s}(\mathbb{R}^{n})$ we can interpret $(-\Delta)^s u$ as a distribution 
on $\Omega$, and a simple computation shows that
\begin{align*}
\lefteqn{\langle(-\Delta)^{s}u+qu-f,\psi\rangle_{\mathcal{D}'(\Omega)\times\mathcal{D}(\Omega)}}\\
  =&\int_{\mathbb{R}^{n}}(-\Delta)^{s/2}u\cdot(-\Delta)^{s/2}\psi\dx+\int_{\Omega}qu\psi\dx-\int_{\Omega}f\psi\dx=0
\end{align*}
for all test functions $\psi\in\mathcal{D}(\Omega)=C_{c}^{\infty}(\Omega)$, so that the assertion follows by continuous extension. 
\end{proof}

We now introduce the Dirichlet trace operator in abstract quotient
spaces.
\begin{lemma}
\label{lemma:dirichlet_trace} With $\Omega_{e}=\mathbb{R}^{n}\setminus\overline{\Omega}$,
we define 
\[
\gamma_{\Omega_{e}}:\ H^{s}(\mathbb{R}^{n})\to H(\Omega_{e}):=H^{s}(\mathbb{R}^{n})/H_{0}^{s}(\Omega),\quad u\mapsto u+H_{0}^{s}(\Omega).
\]
Then, for all $u,v\in H^{s}(\mathbb R^n)$, 
\[
\gamma_{\Omega_{e}}u=\gamma_{\Omega_{e}}v\quad\text{ implies that }\quad u(x)=v(x)\quad\text{ for \ensuremath{x\in\Omega_{e}} a.e.}
\]
\end{lemma}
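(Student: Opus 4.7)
The plan is to reduce the statement to showing that any element of $H_{0}^{s}(\Omega)$ vanishes almost everywhere on $\Omega_{e}$, and then exploit the definition of $H_{0}^{s}(\Omega)$ as the $H^{s}(\mathbb{R}^{n})$-closure of $C_{c}^{\infty}(\Omega)$.

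Concretely, I would set $w := u - v$. The assumption $\gamma_{\Omega_{e}} u = \gamma_{\Omega_{e}} v$ is equivalent to $w \in H_{0}^{s}(\Omega)$, so by definition there exists a sequence $(w_{k})_{k\in\mathbb{N}} \subseteq C_{c}^{\infty}(\Omega)$ with $w_{k} \to w$ in $H^{s}(\mathbb{R}^{n})$. Since the $H^{s}$-norm controls the $L^{2}$-norm, we have $w_{k} \to w$ in $L^{2}(\mathbb{R}^{n})$ as well.

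Each $w_{k}$ has support in $\Omega$, hence $w_{k}(x) = 0$ for every $x \in \Omega_{e}$ (in particular a.e.). Passing to a subsequence that converges pointwise a.e.\ on $\mathbb{R}^{n}$, I conclude that $w(x) = 0$ for a.e.\ $x \in \Omega_{e}$, which is the claim $u(x) = v(x)$ a.e.\ on $\Omega_{e}$.

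There is no real obstacle here; the only mildly delicate point is remembering that one needs to pass to an a.e.\ convergent subsequence (standard consequence of $L^{2}$-convergence), rather than claiming pointwise convergence directly from the $H^{s}$-convergence. Everything else is an immediate unpacking of the definitions of the quotient space $H(\Omega_{e})$ and of $H_{0}^{s}(\Omega)$.
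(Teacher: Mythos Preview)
Your proposal is correct and follows essentially the same argument as the paper: reduce to $w=u-v\in H_0^s(\Omega)$, take an approximating sequence in $C_c^\infty(\Omega)$, use that $H^s$-convergence implies $L^2$-convergence, and conclude from $w_k|_{\Omega_e}=0$. The only cosmetic difference is that you pass to an a.e.\ convergent subsequence, whereas the paper simply notes that the $L^2(\Omega_e)$-limit of the zero sequence is zero; both are valid ways to finish.
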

\begin{proof}
If $\gamma_{\Omega_{e}}u=\gamma_{\Omega_{e}}v$, then $u-v\in H_{0}^{s}(\Omega)$,
so that there exists a sequence $(\phi_{k})_{k\in\mathbb{N}}\subset C_{c}^{\infty}(\Omega)$
with $\phi_{k}\to u-v$ in $H^{s}(\R^n)$. In particular, this implies that $\phi_{k}|_{\Omega_{e}}=0$
and $\phi_{k}\to u-v$ in $L^{2}(\R^n)$, so that it follows that $u|_{\Omega_{e}}=v|_{\Omega_{e}}$
as $L^{2}(\Omega_{e})$-functions. 
\end{proof}

For the sake of readability we will write $u|_{\Omega_{e}}$ instead
of $\gamma_{\Omega_{e}}u$ in the following. Also note that Lemma
\ref{lemma:dirichlet_trace} implies that for two functions $F,G\in C_{c}^{\infty}(\Omega_{e})$,
$F=G$ if and only if $F-G\in H_{0}^{s}(\Omega)$, so that we can
identify $C_{c}^{\infty}(\Omega_{e})$ with its image in the quotient
space $H(\Omega_{e})$ and thus consider $C_{c}^{\infty}(\Omega_{e})$
as a subspace of $H(\Omega_{e})$.

We can now state the following result on the solvability of the Dirichlet
problem and the definition of Neumann boundary values. 
\begin{lemma}
\label{lemma:forward_theory} Let $q\in L_{+}^{\infty}(\Omega)$. 

\begin{itemize}
	\item[(a)] For every $F\in H(\Omega_{e})$ and $f\in L^{2}(\Omega)$,
	we have that $u\in H^{s}(\mathbb{R}^{n})$ solves the Dirichlet problem
	\begin{align}
	(-\Delta)^{s}u+qu=f\quad\text{ in \ensuremath{\Omega}},\quad u|_{\Omega_{e}}=F,\label{Dirichlet BVP}
	\end{align}
	if and only if $u=u^{(0)}+u^{(F)}$, where $u^{(F)}\in H^{s}(\mathbb{R}^{n})$
	fulfills $u^{(F)}|_{\Omega_{e}}=F$ (for $F\in C_{c}^{\infty}(\Omega_{e})$ we can simply choose $u^{(F)}:=F$), 
	and $u^{(0)}\in H_{0}^{s}(\Omega)$ solves 
	\[
	\mathscr{B}_{q}(u^{(0)},w)=-\mathscr{B}_{q}(u^{(F)},w)+\int_{\Omega}fw\dx\quad\text{ for all }w\in H_{0}^{s}(\Omega).
	\]
	The Dirichlet problem \eqref{Dirichlet BVP} is uniquely solvable
	and the solution $u\in H^{s}(\mathbb{R}^{n})$ depends linearly and
	continuously on $F\in H(\Omega_{e})$ and $f\in L^{2}(\Omega)$.
	
	\item[(b)] For a solution $u\in H^{s}(\mathbb{R}^{n})$ of $(-\Delta)^{s}u+qu=0$,
	we define the Neumann exterior data $\mathcal{N}u\in H(\Omega_{e})^{*}$
	by 
	\[
	\left\langle \mathcal{N}u,G\right\rangle :=\mathscr{B}_{q}(u,v^{(G)}),\quad\text{where \ensuremath{v^{(G)}\in H^{s}(\mathbb{R}^{n})} fulfills \ensuremath{v^{(G)}|_{\Omega_{e}}=G},}
	\]
	where $H(\Omega_e)^*$ is the dual space of $H(\Omega_e)$ and $\left\langle \cdot,\cdot\right\rangle :=\left\langle \cdot,\cdot\right\rangle _{H(\Omega_{e})^{*}\times H(\Omega_{e})}$.
	Then the Dirichlet-to-Neumann operator 
	\[
	\Lambda(q):\ H(\Omega_{e})\to H(\Omega_{e})^{*},\quad F\mapsto\mathcal{N}u,
	\]
	is a symmetric linear bounded operator, where $u$ solves \eqref{Dirichlet BVP} with $f=0$.
	
\end{itemize}

\end{lemma}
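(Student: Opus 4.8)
The plan is to reduce everything to the Lax--Milgram theorem applied to the bilinear form $\mathscr{B}_q$ on the Hilbert space $H_0^s(\Omega)$, combined with the variational characterization of Lemma~\ref{lemma:equivalence}. \textbf{Step 1 (boundedness and coercivity of $\mathscr{B}_q$).} For $u,w\in H^s(\mathbb{R}^n)$, Cauchy--Schwarz gives $|\mathscr{B}_q(u,w)|\le \|(-\Delta)^{s/2}u\|_{L^2}\|(-\Delta)^{s/2}w\|_{L^2}+\|q\|_{L^\infty(\Omega)}\|u\|_{L^2(\Omega)}\|w\|_{L^2(\Omega)}\le C\|u\|_{H^s(\mathbb{R}^n)}\|w\|_{H^s(\mathbb{R}^n)}$, so $\mathscr{B}_q$ is bounded. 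For $w\in H_0^s(\Omega)$ the function $w$ vanishes a.e.\ outside $\Omega$, hence $\|w\|_{L^2(\Omega)}=\|w\|_{L^2(\mathbb{R}^n)}$, and writing $c_0>0$ for the essential infimum of $q$ over $\Omega$ we obtain
\[
\mathscr{B}_q(w,w)=\|(-\Delta)^{s/2}w\|_{L^2(\mathbb{R}^n)}^2+\int_\Omega q\,w^2\dx\ \ge\ \min\{1,c_0\}\,\|w\|_{H^s(\mathbb{R}^n)}^2,
\]
which is precisely the point where the hypothesis $q\in L_+^\infty(\Omega)$ is used.

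\textbf{Step 2 (part (a)).} Given $F\in H(\Omega_e)$, pick any representative $u^{(F)}\in H^s(\mathbb{R}^n)$ with $u^{(F)}|_{\Omega_e}=F$ (for $F\in C_c^\infty(\Omega_e)$ one may take $u^{(F)}=F$). By definition of the quotient $H(\Omega_e)=H^s(\mathbb{R}^n)/H_0^s(\Omega)$, a function $u\in H^s(\mathbb{R}^n)$ satisfies $u|_{\Omega_e}=F$ if and only if $u^{(0)}:=u-u^{(F)}\in H_0^s(\Omega)$; and by Lemma~\ref{lemma:equivalence}, $u$ then solves $(-\Delta)^su+qu=f$ in $\Omega$ if and only if $\mathscr{B}_q(u^{(0)},w)=-\mathscr{B}_q(u^{(F)},w)+\int_\Omega fw\dx$ for all $w\in H_0^s(\Omega)$. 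The right-hand side is a bounded linear functional on $H_0^s(\Omega)$, so Step~1 and Lax--Milgram give a unique such $u^{(0)}$, hence a unique solution $u=u^{(0)}+u^{(F)}$. For the continuous dependence, observe that $H_0^s(\Omega)$ is closed in $H^s(\mathbb{R}^n)$, so the quotient map admits a bounded (indeed isometric) linear right inverse, namely the orthogonal projection onto $H_0^s(\Omega)^{\perp}$; choosing $u^{(F)}$ this way gives $\|u^{(F)}\|_{H^s(\mathbb{R}^n)}\le C\|F\|_{H(\Omega_e)}$, and combining with the Lax--Milgram estimate for $u^{(0)}$ yields $\|u\|_{H^s(\mathbb{R}^n)}\le C(\|F\|_{H(\Omega_e)}+\|f\|_{L^2(\Omega)})$; linearity in $(F,f)$ follows from uniqueness.

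\textbf{Step 3 (part (b)).} Let $u\in H^s(\mathbb{R}^n)$ solve $(-\Delta)^su+qu=0$. If $v^{(G)}$ and $\tilde v^{(G)}$ are two representatives of $G\in H(\Omega_e)$, then $v^{(G)}-\tilde v^{(G)}\in H_0^s(\Omega)$, so Lemma~\ref{lemma:equivalence} with $f=0$ gives $\mathscr{B}_q(u,v^{(G)}-\tilde v^{(G)})=0$; hence $\mathcal{N}u$ is well defined. Boundedness of $\mathscr{B}_q$ together with the bounded section of the quotient map gives $|\langle \mathcal{N}u,G\rangle|=|\mathscr{B}_q(u,v^{(G)})|\le C\|u\|_{H^s(\mathbb{R}^n)}\|G\|_{H(\Omega_e)}$, so $\mathcal{N}u\in H(\Omega_e)^*$ with $\|\mathcal{N}u\|_{H(\Omega_e)^*}\le C\|u\|_{H^s(\mathbb{R}^n)}$. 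Writing $u_F$ for the solution of \eqref{Dirichlet BVP} with data $F$ and $f=0$, part (a) shows $F\mapsto u_F$ is linear and bounded, so $\Lambda(q)=\mathcal{N}\circ(F\mapsto u_F)$ is a bounded linear operator. Finally, since $u_G$ is itself an admissible representative of $G$, we may take $v^{(G)}=u_G$, obtaining $\langle \Lambda(q)F,G\rangle=\mathscr{B}_q(u_F,u_G)$, which is symmetric in $(F,G)$ because $\mathscr{B}_q$ is a symmetric bilinear form; thus $\Lambda(q)$ is symmetric.

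\textbf{Main obstacle.} No step is deep; the only points demanding care are (i) verifying that coercivity genuinely exploits $q\in L_+^\infty(\Omega)$ and the fact that elements of $H_0^s(\Omega)$ vanish outside $\Omega$, and (ii) the bookkeeping with the quotient space $H(\Omega_e)$ — in particular justifying that the representatives $u^{(F)}$ and $v^{(G)}$ can be chosen to depend linearly and boundedly on $F$ and $G$, which is what makes the continuous-dependence statement in (a) and the boundedness of $\Lambda(q)$ in (b) clean.
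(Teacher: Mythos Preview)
Your proof is correct and follows exactly the approach the paper sketches: verify that $\mathscr{B}_q$ is bounded, symmetric, and coercive on $H_0^s(\Omega)$, then apply Lax--Milgram together with the variational equivalence of Lemma~\ref{lemma:equivalence}. The paper's own proof is a one-sentence pointer to this argument, and your write-up simply fills in the standard details (including the quotient-space bookkeeping for continuous dependence and the symmetry identity $\langle\Lambda(q)F,G\rangle=\mathscr{B}_q(u_F,u_G)$).
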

\begin{proof}
Obviously $\mathscr{B}_{q}$ is a coercive, symmetric, and continuous
bilinear form on the Hilbert space $H_{0}^{s}(\Omega)$. Thus the
assertion follows from a standard application of the Lax-Milgram theorem
and the equivalence result in Lemma \ref{lemma:equivalence}. 
\end{proof}

\begin{remark}
If $u\in H^{2s}(\mathbb R^n)$  solves 
\[
(-\Delta)^{s}u+qu=0\quad\text{ in \ensuremath{\Omega}},
\]
then a computation as in the proof of Lemma \ref{lemma:equivalence}
shows that 
\[
\left\langle \mathcal{N}u,F\right\rangle =\int_{\Omega_{e}}(-\Delta)^{s}u\cdot v^{(F)}\dx
\]
where $v^{(F)}|_{\Omega_{e}}=F$. This motivates to formally write the
Neumann boundary values as 
\[
\mathcal{N}u=(-\Delta)^{s}u|_{\Omega_{e}}.
\]
Note that $\mathcal{N}u=(-\Delta)^{s}u|_{\Omega_{e}}\in L^2(\Omega_e)$ rigorously holds under the additional smoothness condition $u\in H^{2s}(\R^n)$ (not only $u\in H^s(\R^n)$) but no regularity assumptions on the open set $\Omega\subseteq \R^n$ is required. 
Alternatively, it is also possible to justify the notation $\mathcal{N}u=(-\Delta)^{s}u|_{\Omega_{e}}$ without the additional smoothness (i.e., for $u\in H^s(\R^n)$) when regularity assumptions on $\Omega$ are imposed, cf.~\cite{ghosh2016calder}. 
\end{remark}

For a more in-depth discussion on the spaces of Dirichlet and Neumann
boundary values for Lipschitz domains we refer readers to \cite{ghosh2016calder}.
For the results in this work it suffices to use the abstract quotient
space definitions given above, which also has the advantage that we
can treat arbitrary open sets $\Omega$ without any boundary regularity
assumptions. 

\section{Monotonicity relations and localized potentials for the fractional
Schrödinger equation\label{Section 3}}

In this section we will derive monotonicity and localized potentials
results for the fractional Schrödinger equation 
\[
(-\Delta)^{s}u+qu=0\quad\text{ in \ensuremath{\Omega}}.
\]

\subsection{Monotonicity relations}

We will first show that increasing the coefficient $q$ increases
the Dirichlet-Neumann-operator in the sense of quadratic forms.
\begin{lemma}
\label{Lemma for monotonicity}(Monotonicity relations) Let $n\in\mathbb{N}$,
$\Omega\subseteq\mathbb{R}^{n}$ be an open set, and $s>0$. For $j=0,1$,
let $q_{j}\in L^\infty_+(\Omega)$ and $u_{j}\in H^{s}(\mathbb{R}^{n})$
be solutions of 
\begin{equation}
\begin{cases}
(-\Delta)^{s}u_{j}+q_{j}u_{j}=0 & \mbox{ in }\Omega,\\
u_{j}|_{\Omega_{e}}=F,
\end{cases}\label{nonlocal equation for monotonicity}
\end{equation}
where $F\in H(\Omega_{e})$. Then we have the following monotonicity
relations 
\begin{equation}
\left\langle \left(\Lambda(q_{1})-\Lambda(q_{0}) \right) F,F\right\rangle \leq\int_{\Omega}(q_{1}-q_{0})|u_{0}|^{2}dx,\label{monotone relation 1}
\end{equation}
and 
\begin{equation}
\left\langle (\Lambda(q_1)-\Lambda(q_0))F,F\right\rangle \geq\int_{\Omega}\left(q_{1}-q_{0}\right)|u_{1}|^{2}dx.\label{monotone relation 2} 
\end{equation}
Moreover, we have
\begin{equation}
\left\langle (\Lambda(q_1)-\Lambda(q_0))F,F\right\rangle \geq\int_{\Omega}\dfrac{q_{0}}{q_{1}}\left(q_{1}-q_{0}\right)|u_{0}|^{2}dx\label{monotone relation 3}
\end{equation}
and
\begin{equation}
\left\langle (\Lambda(q_1)-\Lambda(q_0))F,F\right\rangle \leq\int_{\Omega}\dfrac{q_{1}}{q_{0}}\left(q_{1}-q_{0}\right)|u_{1}|^{2}dx.\label{monotone relation 4}
\end{equation}
\end{lemma}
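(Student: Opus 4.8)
The plan is to rewrite the quadratic forms $\langle\Lambda(q_j)F,F\rangle$ through the bilinear form $\mathscr{B}_{q_j}$ and to exploit that each $u_j$ minimizes the energy for \emph{its own} potential. Since $u_j|_{\Omega_e}=F$, one may take $v^{(F)}=u_j$ in the definition of the Neumann data in Lemma~\ref{lemma:forward_theory}, so that
\[
\langle\Lambda(q_j)F,F\rangle=\mathscr{B}_{q_j}(u_j,u_j)=\int_{\mathbb{R}^n}|(-\Delta)^{s/2}u_j|^2\dx+\int_\Omega q_j|u_j|^2\dx,\qquad j=0,1.
\]
As $\mathscr{B}_{q_j}$ is symmetric, continuous and coercive on $H_0^s(\Omega)$ and $u_j$ solves $\mathscr{B}_{q_j}(u_j,w)=0$ for all $w\in H_0^s(\Omega)$ (Lemma~\ref{lemma:forward_theory} with $f=0$), writing a competitor as $v=u_j+w$ with $w\in H_0^s(\Omega)$ gives $\mathscr{B}_{q_j}(v,v)=\mathscr{B}_{q_j}(u_j,u_j)+\mathscr{B}_{q_j}(w,w)\ge\mathscr{B}_{q_j}(u_j,u_j)$, i.e.\ $u_j$ minimizes $v\mapsto\mathscr{B}_{q_j}(v,v)$ over $\{v\in H^s(\mathbb{R}^n):v|_{\Omega_e}=F\}$. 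I will also use the elementary splittings $\mathscr{B}_{q_1}(v,v)=\mathscr{B}_{q_0}(v,v)+\int_\Omega(q_1-q_0)|v|^2\dx$ and $\mathscr{B}_{q_1}(v,w)=\mathscr{B}_{q_0}(v,w)+\int_\Omega(q_1-q_0)vw\dx$.

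Inequality~\eqref{monotone relation 1} then follows in two lines: $\langle\Lambda(q_1)F,F\rangle=\mathscr{B}_{q_1}(u_1,u_1)\le\mathscr{B}_{q_1}(u_0,u_0)=\mathscr{B}_{q_0}(u_0,u_0)+\int_\Omega(q_1-q_0)|u_0|^2\dx=\langle\Lambda(q_0)F,F\rangle+\int_\Omega(q_1-q_0)|u_0|^2\dx$, where the inequality is legal since $u_0|_{\Omega_e}=F$. Inequality~\eqref{monotone relation 2} is the same argument with the indices $0$ and $1$ swapped, using $\mathscr{B}_{q_0}(u_0,u_0)\le\mathscr{B}_{q_0}(u_1,u_1)=\langle\Lambda(q_1)F,F\rangle-\int_\Omega(q_1-q_0)|u_1|^2\dx$.

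For the sharper bounds~\eqref{monotone relation 3}--\eqref{monotone relation 4} I would not discard the entire $H_0^s(\Omega)$-correction, only its nonlocal gradient part $\int_{\mathbb{R}^n}|(-\Delta)^{s/2}w|^2\dx\ge0$. Putting $w:=u_1-u_0\in H_0^s(\Omega)$, expanding $\mathscr{B}_{q_1}(u_1,u_1)$ with $u_1=u_0+w$, and using $\mathscr{B}_{q_0}(u_0,w)=0$ and $\mathscr{B}_{q_1}(u_0,w)=\int_\Omega(q_1-q_0)u_0w\dx$ gives
\[
\langle(\Lambda(q_1)-\Lambda(q_0))F,F\rangle=\int_\Omega(q_1-q_0)|u_0|^2\dx+2\int_\Omega(q_1-q_0)u_0w\dx+\mathscr{B}_{q_1}(w,w),
\]
and bounding $\mathscr{B}_{q_1}(w,w)\ge\int_\Omega q_1w^2\dx$ reduces \eqref{monotone relation 3} to the pointwise identity
\[
(q_1-q_0)u_0^2+2(q_1-q_0)u_0w+q_1w^2-\tfrac{q_0}{q_1}(q_1-q_0)u_0^2=\Big(\tfrac{q_1-q_0}{\sqrt{q_1}}\,u_0+\sqrt{q_1}\,w\Big)^2\ge0,
\]
which is meaningful precisely because $q_0,q_1\in L^\infty_+(\Omega)$. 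For \eqref{monotone relation 4} I would run the symmetric computation with $w:=u_0-u_1$, obtaining an \emph{upper} bound for $\langle(\Lambda(q_1)-\Lambda(q_0))F,F\rangle$ whose gap from $\int_\Omega\tfrac{q_1}{q_0}(q_1-q_0)|u_1|^2\dx$ is the square $\big(\tfrac{q_1-q_0}{\sqrt{q_0}}\,u_1-\sqrt{q_0}\,w\big)^2\ge0$. Everything here is routine given Lemmas~\ref{lemma:equivalence} and~\ref{lemma:forward_theory}, and \eqref{monotone relation 1}--\eqref{monotone relation 2} drop out immediately; the one non-obvious step — the crux — is to guess the correction factors $q_0/q_1$ and $q_1/q_0$ that turn the leftover integrands into exact squares, after which verification is a one-line algebraic check.
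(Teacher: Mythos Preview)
Your proof is correct and follows essentially the same approach as the paper: for \eqref{monotone relation 1}--\eqref{monotone relation 2} your energy-minimization argument is exactly the expansion $0\le\mathscr{B}_{q_1}(u_1-u_0,u_1-u_0)$ the paper uses, and for \eqref{monotone relation 3}--\eqref{monotone relation 4} both proofs drop the nonlocal gradient of $w=u_1-u_0$ and complete the square in the potential integral, with your square $\big(\tfrac{q_1-q_0}{\sqrt{q_1}}u_0+\sqrt{q_1}w\big)^2$ being literally equal to the paper's $q_1\big(u_1-\tfrac{q_0}{q_1}u_0\big)^2$. The only difference is organizational: the paper reaches the local integral via the identity $\langle(\Lambda(q_1)-\Lambda(q_0))F,F\rangle=\mathscr{B}_{q_0}(u_0-u_1,u_0-u_1)+\int_\Omega(q_1-q_0)|u_1|^2\dx$, whereas you expand $\mathscr{B}_{q_1}(u_0+w,u_0+w)$ directly.
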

\begin{proof}
The proof is similar to \cite[Lemma 2.1]{harrach2010exact}. Note
also that the idea goes back to Ikehata, Kang, Seo, and Sheen \cite{ikehata1998size,kang1997inverse},
and that similar results have been obtained and used in many other
works on the monotonicity and factorization method.

From the definition of the Dirichlet-Neumann-operator in Lemma~\ref{lemma:forward_theory}
we have that 
\begin{align*}
\langle\Lambda(q_0)F,F\rangle & =\mathscr{B}_{q_{0}}(u_{0},u_{0}),\text{ and}\\
\langle\Lambda(q_1)F,F\rangle & =\mathscr{B}_{q_{1}}(u_{1},u_{1})=\mathscr{B}_{q_{1}}(u_{1},u_{0})
\end{align*}
We thus obtain 
\begin{align*}
0\leq & \mathscr{B}_{q_{1}}(u_{1}-u_{0},u_{1}-u_{0})=\mathscr{B}_{q_{1}}(u_{1},u_{1})-2\mathscr{B}_{q_{1}}(u_{1},u_{0})+\mathscr{B}_{q_{1}}(u_{0},u_{0})\\
= & -\langle\Lambda(q_1)F,F\rangle+\mathscr{B}_{q_{1}}(u_{0},u_{0})\\
= & \langle(\Lambda(q_0)-\Lambda(q_1))F,F\rangle+\mathscr{B}_{q_{1}}(u_{0},u_{0})-\mathscr{B}_{q_{0}}(u_{0},u_{0})\\
= & \langle(\Lambda(q_0)-\Lambda(q_1))F,F\rangle+\int_{\Omega}(q_{1}-q_{0})|u_{0}|^{2}dx,
\end{align*}
which shows the first assertion (\ref{monotone relation 1}). Interchanging $q_{0}$ and $q_{1}$ in the above calculation also
yields \eqref{monotone relation 2}. 

In addition, when $q_j\in L^\infty _+(\Omega)$, one can see
\begin{align*}
\lefteqn{\left\langle (\Lambda(q_1)-\Lambda(q_0))F,F\right\rangle = \mathscr{B}_{q_{0}}(u_{0}-u_{1},u_{0}-u_{1})-\int_{\Omega}(q_{0}-q_{1})|u_{1}|^{2}dx}\\
& = \int_{\mathbb{R}^{n}}|(-\Delta)^{s/2}(u_{1}-u_{0})|^{2}dx+\int_{\Omega}\left(q_{0}(u_{1}-u_{0})^{2}+(q_{1}-q_{0})|u_{1}|^{2}\right)dx\\
&\geq  \int_{\Omega}\left(q_{0}(u_{1}-u_{0})^{2}+(q_{1}-q_{0})|u_{1}|^{2}\right)dx
=  \int_{\Omega}\left(q_{1}u_{1}^{2}-2q_{0}u_{1}u_{0}+q_{0}u_{0}^{2}\right)dx\\
&=  \int_{\Omega}q_{1}\left(u_{1}-\dfrac{q_{0}}{q_{1}}u_{0}\right)^{2}dx+\int_{\Omega}\left(q_{0}-\dfrac{q_{0}^{2}}{q_{1}}\right)|u_{0}|^{2}dx\\
&\geq  \int_{\Omega}\dfrac{q_{0}}{q_{1}}\left(q_{1}-q_{0}\right)|u_{0}|^{2}dx,
\end{align*}
which proves \eqref{monotone relation 3}, and interchanging $q_{0}$ and $q_{1}$ yields \eqref{monotone relation 4}. 
\end{proof}

For two functions $q_{0},q_{1}\in L_+^{\infty}(\Omega)$,
we write $q_{0}\geq q_{1}$ if $q_{0}(x)\geq q_{1}(x)$ almost everywhere
in $\Omega$. For two operators $\Lambda(q_0),\Lambda(q_1):\ H(\Omega_{e})\to H(\Omega_{e})^{*}$
we write $\Lambda(q_0)\geq\Lambda(q_1)$ if 
\begin{align}
\left\langle (\Lambda(q_0)-\Lambda(q_1))F,F\right\rangle \geq 0\mbox{ for all }F\in H(\Omega_{e}).\label{quadratic sense}
\end{align}
With respect to these partial orders, we have the following monotonicity
property of the Dirichlet-Neumann-operators.
\begin{corollary}
\label{Cor Monoton formula}For any $q_{0},q_{1}\in L^{\infty}_+(\Omega)$,
\begin{equation}
q_{0}\geq q_{1}\mbox{ implies }\Lambda(q_0)\geq\Lambda(q_1).\label{Monotonicity based reconstruction}
\end{equation}
\end{corollary}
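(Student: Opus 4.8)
The plan is to derive Corollary \ref{Cor Monoton formula} directly from the monotonicity relations established in Lemma \ref{Lemma for monotonicity}, which do all the analytical work. The key observation is that inequality \eqref{monotone relation 2} expresses the quadratic form of $\Lambda(q_1) - \Lambda(q_0)$ as being bounded below by an integral against $|u_1|^2$, where $u_1$ is the solution of the fractional Schr\"odinger equation with potential $q_1$ and exterior data $F$.

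First I would fix an arbitrary $F \in H(\Omega_e)$, and for $j = 0,1$ let $u_j \in H^s(\mathbb{R}^n)$ be the (unique, by Lemma \ref{lemma:forward_theory}(a)) solution of \eqref{nonlocal equation for monotonicity}. Then I would apply \eqref{monotone relation 2}, which gives
\[
\left\langle (\Lambda(q_1) - \Lambda(q_0)) F, F \right\rangle \geq \int_\Omega (q_1 - q_0) |u_1|^2 \dx.
\]
Now, under the hypothesis $q_0 \geq q_1$ almost everywhere in $\Omega$, the integrand $(q_1 - q_0)|u_1|^2$ is nonpositive almost everywhere, so the right-hand side is $\leq 0$. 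This yields
\[
\left\langle (\Lambda(q_1) - \Lambda(q_0)) F, F \right\rangle \geq \int_\Omega (q_1 - q_0)|u_1|^2 \dx \geq \cdots
\]
wait — I must be careful with the sign: we want $\Lambda(q_0) \geq \Lambda(q_1)$, i.e., $\langle (\Lambda(q_0) - \Lambda(q_1)) F, F\rangle \geq 0$, equivalently $\langle (\Lambda(q_1) - \Lambda(q_0)) F, F \rangle \leq 0$. So the useful inequality here is actually \eqref{monotone relation 1}, which bounds the quadratic form from \emph{above}: $\langle (\Lambda(q_1) - \Lambda(q_0)) F, F \rangle \leq \int_\Omega (q_1 - q_0)|u_0|^2 \dx$. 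Since $q_0 \geq q_1$ forces $(q_1 - q_0)|u_0|^2 \leq 0$ pointwise a.e., the integral is $\leq 0$, hence $\langle (\Lambda(q_1) - \Lambda(q_0)) F, F \rangle \leq 0$, which is exactly $\Lambda(q_0) \geq \Lambda(q_1)$ in the sense of \eqref{quadratic sense}.

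Since $F \in H(\Omega_e)$ was arbitrary, this establishes \eqref{Monotonicity based reconstruction}. There is no real obstacle here: the entire content is already packaged in Lemma \ref{Lemma for monotonicity}, and the corollary is just the specialization obtained by imposing the pointwise ordering $q_0 \geq q_1$ and noting the sign of the resulting integrand. (Either \eqref{monotone relation 1} or \eqref{monotone relation 3} would serve equally well, since both bound the quadratic form above by an integral of $(q_1 - q_0)$ times a nonnegative weight times $|u_0|^2$.) The only point worth stating carefully is matching the direction of the inequality in \eqref{quadratic sense} with the sign convention in the monotonicity relation, which is precisely the subtlety flagged above.
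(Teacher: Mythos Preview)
Your argument is correct and matches the paper's proof, which simply says the corollary follows immediately from Lemma~\ref{Lemma for monotonicity}; you have just written out explicitly which inequality does the job. One small slip in your parenthetical remark: \eqref{monotone relation 3} is a \emph{lower} bound on the quadratic form, not an upper bound, so it would not serve here---the correct alternative upper bound is \eqref{monotone relation 4}.
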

\begin{proof}
This follows immediately from Lemma~\ref{Lemma for monotonicity}. 
\end{proof}

\subsection{Localized potentials }

In this subsection we will show the existence of localized potentials
solutions of the fractional Schr\"odinger equation that have an arbitrarily
high energy on some part of the imaging domain and an arbitrarily
low energy on another part. These localized potentials will allow
us to control the energy terms in Lemma \ref{Lemma for monotonicity}
and show a converse of the monotonicity relation \eqref{Monotonicity based reconstruction}.

For the fractional Schrödinger equation, the existence of localized
potentials is a simple consequence from the unique continuation and
Runge approximation result shown by Ghosh, Salo and Uhlmann \cite{ghosh2016calder},
see also \cite{ghosh2017calder} for further discussions and \cite{harrach2018monotonicity} for
the connection between Runge approximation properties and localized
potentials. We use the following unique continuation result from Ghosh, Salo and Uhlmann \cite{ghosh2016calder}.

\begin{theorem}
\cite[Theorem 1.2]{ghosh2016calder} \label{thm:unique_continuation}
Let $n\in \N$, and $0<s<1$. If $u\in H^{r}(\mathbb{R}^{n})$ for
some $r\in\mathbb{R}$, and both $u$ and $(-\Delta)^{s}u$ vanish
in the same arbitrary non-empty open set in $\mathbb R^n$, then $u\equiv0$ in $\mathbb{R}^{n}$. 
\end{theorem}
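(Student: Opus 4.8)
I would prove this strong unique continuation property the same way it is usually established, namely via the Caffarelli--Silvestre extension combined with a Carleman estimate at the boundary. The first step is to realise $(-\Delta)^s u$ as a weighted conormal derivative of a degenerate-elliptic extension. Writing points of the half-space $\R^{n+1}_+=\{(x,x_{n+1}):x_{n+1}>0\}$ as $(x,x_{n+1})$, I would take $w=w(x,x_{n+1})$ to be the solution of
\[
\nabla\cdot\bigl(x_{n+1}^{1-2s}\nabla w\bigr)=0\quad\text{in }\R^{n+1}_+,\qquad w(\cdot,0)=u,
\]
for which one has the Caffarelli--Silvestre identity $(-\Delta)^s u=-c_s\lim_{x_{n+1}\to 0^+}x_{n+1}^{1-2s}\,\partial_{x_{n+1}}w$ with $c_s>0$, interpreted weakly. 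Since $u$ is only assumed to lie in $H^{r}(\R^n)$ for some $r\in\R$, a preliminary task is to make sense of this extension and of both boundary traces in the appropriate weighted Sobolev spaces; after localisation near the open set where everything vanishes, and a regularisation (or a reduction to the case of higher regularity), this is routine, the point being that the weight $|x_{n+1}|^{1-2s}$ is a Muckenhoupt $A_2$-weight and provides the correct functional-analytic setting.

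With the extension in hand, let $\mathcal{O}\subseteq\R^n$ be the non-empty open set on which both $u$ and $(-\Delta)^s u$ vanish. Then $w$ has \emph{vanishing Cauchy data} on the flat piece $\mathcal{O}\times\{0\}$: both its Dirichlet trace $w(\cdot,0)$ and its weighted conormal derivative $x_{n+1}^{1-2s}\partial_{x_{n+1}}w|_{x_{n+1}=0}$ are zero there. The heart of the matter is then a Carleman estimate for the operator $\nabla\cdot(x_{n+1}^{1-2s}\nabla\,\cdot\,)$ that is valid \emph{up to the boundary} $\{x_{n+1}=0\}$ and handles the degeneracy of the weight; one uses it on a cutoff of $w$ near an arbitrary point of $\mathcal{O}\times\{0\}$ and, because the Cauchy data vanish, the boundary contributions drop out, which forces $w$ to vanish in a full half-space neighbourhood of that point in $\overline{\R^{n+1}_+}$. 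I expect this boundary Carleman estimate to be the main obstacle: for $s=1/2$ it degenerates to a classical harmonic Carleman inequality, but for general $s\in(0,1)$ one needs a weighted version with a carefully chosen (polar/logarithmic-type) weight function, and this is precisely the technical core of the Ghosh--Salo--Uhlmann argument.

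Finally I would propagate this local vanishing to the whole half-space. In the open region $\{x_{n+1}>0\}$ the coefficient $x_{n+1}^{1-2s}$ is real-analytic and the equation is uniformly elliptic on compact subsets, so $w$ is real-analytic there; since $w$ vanishes on a non-empty open subset and $\R^{n+1}_+$ is connected, $w\equiv 0$ in $\R^{n+1}_+$ (equivalently one invokes weak unique continuation for second-order elliptic equations). Taking the trace at $x_{n+1}=0$ gives $u\equiv 0$ in $\R^n$, which is the assertion. The extension bookkeeping in low-regularity spaces and the interior propagation step are standard; the only genuinely hard ingredient is the degenerate boundary Carleman estimate.
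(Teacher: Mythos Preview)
Your outline is essentially the Ghosh--Salo--Uhlmann proof (via the Caffarelli--Silvestre extension and R\"uland's degenerate boundary Carleman estimate), and the key difficulties you flag are the correct ones. However, the paper under review does \emph{not} prove this theorem at all: it is quoted verbatim as \cite[Theorem~1.2]{ghosh2016calder} and used as a black box, so there is no ``paper's own proof'' to compare against. Your sketch therefore goes well beyond what is required here, but it is accurate as a summary of the original argument.
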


Ghosh, Salo and Uhlmann \cite[Theorem. 1.3]{ghosh2016calder} also showed that this unique continuation result implies the Runge-type approximation property that any $L^{2}(\Omega)$-function can be approximated by solutions of the fractional Schrödinger equation with exterior Dirichlet data supported on an arbitrarily small open set. Since our formulation
slightly differs from \cite{ghosh2016calder}, and the proof if short
and simple, we give the proof for the sake of completeness.

\begin{theorem}
\label{thm:runge} Let $n\in \N$, $\Omega\subseteq\mathbb{R}^{n}$
be an open set, $0<s<1$, $q\in L^{\infty}_+(\Omega)$, and $O\subseteq\Omega_{e}=\mathbb{R}^{n}\setminus\overline{\Omega }$
be open. For every $f\in L^{2}(\Omega)$ there exists a sequence $F_{k}\in C_{c}^{\infty}(O)$,
so that the corresponding solutions $u^{k}\in H^{s}(\mathbb{R}^{n})$
of 
\begin{align*}
(-\Delta)^{s}u^{k}+qu^{k}=0\quad\text{ in \ensuremath{\Omega}},\quad u^{k}|_{\Omega_{e}}=F_{k},
\end{align*}
fulfill that $u^{k}|_{\Omega}\to f$ in $L^{2}(\Omega)$. 
\end{theorem}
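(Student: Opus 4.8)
The plan is to prove the Runge approximation property by a standard duality / Hahn--Banach argument, using the unique continuation result in Theorem~\ref{thm:unique_continuation} as the crucial nonlocal ingredient. Let $\mathcal{R}\subseteq L^2(\Omega)$ denote the set of all restrictions $u^F|_\Omega$ to $\Omega$, where $F$ ranges over $C_c^\infty(O)$ and $u^F\in H^s(\mathbb R^n)$ solves \eqref{Dirichlet BVP} with $f=0$ and exterior data $F$. By Lemma~\ref{lemma:forward_theory}(a), $F\mapsto u^F$ is linear, so $\mathcal{R}$ is a linear subspace of $L^2(\Omega)$; the claim is exactly that $\mathcal{R}$ is dense in $L^2(\Omega)$. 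By the Hahn--Banach theorem it suffices to show that any $v\in L^2(\Omega)$ that annihilates $\mathcal{R}$, i.e.\ $\int_\Omega v\, (u^F|_\Omega)\dx = 0$ for all $F\in C_c^\infty(O)$, must vanish identically.

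The key step is to introduce the adjoint (dual) problem. Given $v\in L^2(\Omega)$, extended by zero to $\mathbb R^n$, let $w\in H^s(\mathbb R^n)$ be the solution of
\begin{equation*}
(-\Delta)^s w + q w = v \quad\text{ in }\Omega, \qquad w|_{\Omega_e}=0,
\end{equation*}
which exists and is unique by Lemma~\ref{lemma:forward_theory}(a) (here $w\in H_0^s(\Omega)$). Using the variational formulation (Lemma~\ref{lemma:equivalence}) and the symmetry of $\mathscr{B}_q$, for any $F\in C_c^\infty(O)$ with solution $u^F$ one computes
\begin{equation*}
\int_\Omega v\, u^F \dx = \mathscr{B}_q(w,u^F) = \mathscr{B}_q(u^F,w) = \int_\Omega f\, w \dx = 0
\end{equation*}
is not quite the right identity because $w\in H_0^s(\Omega)$ cannot be used directly as a test function against $u^F$ unless $u^F$ solves the homogeneous equation — which it does. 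More carefully: since $w\in H_0^s(\Omega)$ and $u^F$ solves the homogeneous fractional Schr\"odinger equation, $\mathscr{B}_q(u^F,w)=0$; on the other hand, testing the equation for $w$ against $u^F$ is not immediate since $u^F\notin H_0^s(\Omega)$. The right move is to pair $(-\Delta)^s w + qw = v$ against $u^F$ in the distributional sense on all of $\mathbb R^n$: since $w|_{\Omega_e}=0$ and $q$ is supported in $\Omega$, one gets $\int_{\mathbb R^n}(-\Delta)^{s/2}w\cdot(-\Delta)^{s/2}u^F\dx + \int_\Omega q\,w\,u^F\dx = \int_\Omega v\,u^F\dx$, while the left side equals $\mathscr{B}_q(w,u^F)=\mathscr{B}_q(u^F,w)=0$ because $u^F$ solves the homogeneous equation and $w\in H_0^s(\Omega)$. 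Hence $\int_\Omega v\, u^F\dx = 0$ automatically, and the annihilation hypothesis gives no information this way — so instead I must extract information from the exterior. The correct computation pairs the equation for $w$ against $u^F$ over $\mathbb R^n$ and the equation for $u^F$ against $w$, taking the difference: because $u^F=F$ on $\Omega_e$ and $w=0$ on $\Omega_e$, and both bilinear-form contributions cancel by symmetry, one obtains $\int_\Omega v\, u^F\dx = \langle (-\Delta)^s w, F\rangle$ evaluated on $\Omega_e$, i.e.\ $\int_\Omega v\, u^F\dx = \int_{\Omega_e} (-\Delta)^s w \cdot F \dx$ (interpreting suitably). Thus the annihilation hypothesis forces $(-\Delta)^s w = 0$ on $O$.

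Now $w\in H^s(\mathbb R^n)$ satisfies $w=0$ on $O$ (since $O\subseteq\Omega_e$ and $w|_{\Omega_e}=0$) and $(-\Delta)^s w = 0$ on $O$. Since $O$ is a non-empty open set, Theorem~\ref{thm:unique_continuation} yields $w\equiv 0$ in $\mathbb R^n$. Feeding this back into the equation $(-\Delta)^s w + qw = v$ in $\Omega$ gives $v=0$. By Hahn--Banach, $\mathcal{R}$ is dense in $L^2(\Omega)$, which is precisely the assertion: choosing $F_k\in C_c^\infty(O)$ with $u^k|_\Omega\to f$ in $L^2(\Omega)$.

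\textbf{Main obstacle.} The routine part is the Hahn--Banach / duality wrapper and invoking unique continuation. The delicate step is the integration-by-parts identity linking the annihilation condition $\int_\Omega v\,u^F\dx=0$ to the vanishing of $(-\Delta)^s w$ on $O\subseteq\Omega_e$: one must carefully track that the bilinear-form terms over $\mathbb R^n$ cancel by symmetry of $\mathscr{B}_q$ (using that $u^F$ solves the homogeneous equation and $w\in H_0^s(\Omega)$), so that the only surviving term is the exterior pairing $\langle \mathcal N w, F\rangle$ — essentially the self-adjointness of the DtN construction — and to make this rigorous with $w\in H^s$ rather than $H^{2s}$ one works throughout with the weak/distributional formulations rather than with pointwise traces.
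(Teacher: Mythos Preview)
Your proposal is correct and follows essentially the same duality approach as the paper: reduce to showing the orthogonal complement is trivial, introduce the adjoint solution $w\in H_0^s(\Omega)$ of $(-\Delta)^s w+qw=v$, deduce $(-\Delta)^s w=0$ on $O$, and invoke Theorem~\ref{thm:unique_continuation}. For the integration-by-parts step you correctly flag as the main obstacle, the paper avoids your detour by using the decomposition $u^F=u^{(0)}+F$ from Lemma~\ref{lemma:forward_theory}(a) with $u^{(0)}\in H_0^s(\Omega)$ and $F|_\Omega=0$, which yields the one-line chain $0=\int_\Omega v\,u^F\dx=\int_\Omega v\,u^{(0)}\dx=\mathscr{B}_q(w,u^{(0)})=-\mathscr{B}_q(w,F)$ and hence $(-\Delta)^s w=0$ in $O$ by Lemma~\ref{lemma:equivalence}.
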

\begin{proof}
For $F\in C_{c}^{\infty}(O)$ let $S(F):=u\in H^{s}(\mathbb{R}^{n})$
denote the solution of 
\begin{align}\label{unique_continuation_Dirichlet_solution}
(-\Delta)^{s}u+qu=0\quad\text{ in \ensuremath{\Omega}},\quad u|_{\Omega_{e}}=F,
\end{align}
i.e. (see Lemma~\ref{lemma:forward_theory}) $u=u^{(0)}+F$
where $u^{(0)}\in H_{0}^{s}(\Omega)$ solves $\mathscr{B}_{q}(u^{(0)},w)=-\mathscr{B}_{q}(F,w)$
for all $w\in H_{0}^{s}(\Omega)$.

The assertion follows if we can show that the space of all such solutions
\[
\{ S(F)|_{\Omega}:\ F\in C_{c}^{\infty}(O)\}\subseteq L^{2}(\Omega)
\]
has trivial $L^{2}(\Omega)$-orthogonal complement. To that end let
\[
f\in\{S(F)|_{\Omega}:\ F\in C_{c}^{\infty}(O)\}^{\perp}\subseteq L^{2}(\Omega)
\]
and $v\in H^{s}(\mathbb R^n)$ solve 
\begin{align*}
(-\Delta)^{s}v+qv=f\quad\text{ in \ensuremath{\Omega}},\quad v|_{\Omega_{e}}=0,
\end{align*}
i.e., $v\in H_{0}^{s}(\Omega)$ and $\mathscr{B}_{q}(v,w)=\int_{\Omega}fw|_{\Omega}$dx for all $w\in H_{0}^{s}(\Omega)$.
Then for all $F\in C_{c}^{\infty}(O)$, the solution $u=S(F)$ of \eqref{unique_continuation_Dirichlet_solution} fulfills 
\begin{align*}
0 & =\int_{\Omega}f u dx=\int_{\Omega}f(u^{(0)}+F)dx=\int_{\Omega}f u^{(0)} dx=\mathscr{B}_{q}(v,u^{(0)})=-\mathscr{B}_{q}(v,F).
\end{align*}
Using Lemma \ref{lemma:equivalence} with $O$ instead of $\Omega$
this yields that 
\[
(-\Delta)^{s}v+qv=0\quad\text{ in }O.
\]
Since also $v|_{O}=0$ (cf.\ Lemma~\ref{lemma:dirichlet_trace}),
it follows from Theorem~\ref{thm:unique_continuation} that $v\equiv0$
in $\mathbb{R}^{n}$ and thus $f=0$ which proves the assertion. 
\end{proof}

Theorem~\ref{thm:runge} implies the existence of a localized potentials
sequence. 

\begin{corollary}
\label{cor:localized_potentials} Let $n\in \N$, $\Omega\subseteq\mathbb{R}^{n}$
be an open set, $0<s<1$, $q\in L^{\infty}_+(\Omega)$, and $O\subseteq\Omega_{e}=\mathbb{R}^{n}\setminus\overline{\Omega}$
be an arbitrary open set. For every measurable set $M\subseteq\Omega$ with positive measure, there exists
a sequence $F^{k}\in C_{c}^{\infty}(O)$, so that the corresponding
solutions $u^{k}\in H^{s}(\mathbb{R}^{n})$ of 
\begin{align}\label{eq:cor_loc_pot_uk}
(-\Delta)^{s}u^{k}+qu^{k}=0\quad\text{ in \ensuremath{\Omega}},\quad u^{k}|_{\Omega_{e}}=F^{k},\text{ for all }k\in \mathbb N
\end{align}
fulfill that 
\[
\int_{M}|u^{k}|^{2}\dx\to\infty\quad\text{ and }\quad\int_{\Omega\setminus M}|u^{k}|^{2}\to0 \mbox{ as }k\to \infty.
\]
\end{corollary}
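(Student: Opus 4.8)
The plan is to derive the localized potentials result from the Runge approximation property in Theorem~\ref{thm:runge} by a standard functional-analytic duality argument. The key observation is that the map sending exterior Dirichlet data to the $L^2(\Omega)$-restriction of the solution has dense range, so its adjoint is injective; controlling the energy on $M$ versus on $\Omega\setminus M$ then amounts to showing that a certain linear functional is unbounded.

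First I would introduce the bounded linear \emph{solution operator} $G\colon L^2(\Omega)\to L^2(\Omega)$ associated to the source problem $(-\Delta)^s v + qv = f$ in $\Omega$, $v|_{\Omega_e}=0$, i.e.\ $Gf:=v|_\Omega$, which is well-defined and bounded by Lemma~\ref{lemma:forward_theory}(a), and which is self-adjoint since $\mathscr B_q$ is symmetric: $(Gf_1,f_2)_{L^2(\Omega)}=\mathscr B_q(v_1,v_2)=(f_1,Gf_2)_{L^2(\Omega)}$. Likewise, fixing an arbitrary nonzero $F_0\in C_c^\infty(O)$ and abbreviating by $S$ the affine solution map from Theorem~\ref{thm:runge}, I would note that $S(F)|_\Omega = (S(F)-S(0))|_\Omega + S(0)|_\Omega$ where $F\mapsto (S(F)-S(0))|_\Omega$ is linear; Theorem~\ref{thm:runge} says precisely that $\{S(F)|_\Omega : F\in C_c^\infty(O)\}$ is dense in $L^2(\Omega)$, and by subtracting $S(0)$ the linear space $\{(S(F)-S(0))|_\Omega:F\in C_c^\infty(O)\}$ is dense as well.

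Next, the core step: suppose for contradiction that no such sequence exists. Then there is a constant $c>0$ and, along any sequence $F^k$ with $\int_M|u^k|^2\to\infty$, one cannot simultaneously force $\int_{\Omega\setminus M}|u^k|^2\to 0$; equivalently (and this is the clean way to phrase it) the failure of the conclusion is equivalent to the existence of $C>0$ with
\begin{equation}\label{eq:loc_pot_contradiction}
\int_M |w|^2\dx \le C\int_{\Omega\setminus M}|w|^2\dx \qquad \text{for all } w\in \{(S(F)-S(0))|_\Omega : F\in C_c^\infty(O)\}.
\end{equation}
I would verify this equivalence by the usual normalization trick: if \eqref{eq:loc_pot_contradiction} fails, pick $F^k$ with $\int_M|w_k|^2 \ge k\int_{\Omega\setminus M}|w_k|^2$, rescale so that $\int_{\Omega\setminus M}|w_k|^2 = 1/k$, whence $\int_M|w_k|^2\ge 1$; a further rescaling by a large factor depending on $k$ boosts $\int_M|w_k|^2\to\infty$ while keeping $\int_{\Omega\setminus M}|w_k|^2\to 0$, and then translating back from the linear solutions $w_k$ to genuine solutions $u^k=w_k + S(0)$ only perturbs the energies by an $L^2(\Omega)$-bounded amount, so the asymptotics survive. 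Conversely, if \eqref{eq:loc_pot_contradiction} holds it clearly obstructs the conclusion.

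Finally I would refute \eqref{eq:loc_pot_contradiction} using density. Inequality \eqref{eq:loc_pot_contradiction} says the linear map $w\mapsto \chi_M w$ is bounded from the subspace (equipped with the $L^2(\Omega\setminus M)$-seminorm) into $L^2(M)$; since the subspace is dense in $L^2(\Omega)$ and $\lvert M\rvert>0$, one gets a contradiction by choosing approximants $w_k \to \chi_M$ in $L^2(\Omega)$: then $\int_{\Omega\setminus M}|w_k|^2\to 0$ while $\int_M|w_k|^2\to |M|>0$, violating \eqref{eq:loc_pot_contradiction}. This completes the argument. The main obstacle, and the only place demanding care, is the bookkeeping in the normalization/rescaling step that converts the negation of the conclusion into the clean estimate \eqref{eq:loc_pot_contradiction} and back — in particular making sure the inhomogeneous term $S(0)|_\Omega$ (which is present because the affine map $S$ is not linear) does not interfere with the divergence on $M$ and the decay on $\Omega\setminus M$; this is handled simply because $S(0)$ contributes a fixed finite energy that is negligible once $\int_M|u^k|^2\to\infty$.
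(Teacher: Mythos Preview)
Your argument is correct in substance and rests on the same two ingredients as the paper's proof: (i) by Runge approximation (Theorem~\ref{thm:runge}), solutions can approximate $\chi_M$ in $L^2(\Omega)$, so that $\int_M|\widetilde u^k|^2\to|M|>0$ while $\int_{\Omega\setminus M}|\widetilde u^k|^2\to 0$; (ii) a rescaling then pushes the first integral to $\infty$ while keeping the second one tending to $0$. The paper carries this out directly, without the contradiction framework: it takes $\widetilde u^k|_\Omega\to\chi_M/|M|$ and sets $u^k:=\widetilde u^k/\|\widetilde u^k\|_{L^2(\Omega\setminus M)}^{1/2}$, after first removing a small open ball from $M$ (if necessary) and invoking unique continuation (Theorem~\ref{thm:unique_continuation}) to guarantee the denominator is nonzero.

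A few clean-ups are in order. The operator $G$ and the function $F_0$ you introduce are never used and can be deleted. More importantly, the solution map $S$ from Theorem~\ref{thm:runge} is \emph{linear}, not merely affine: by Lemma~\ref{lemma:forward_theory}(a) the solution depends linearly on $F$, so $S(0)=0$. Your concern about the ``inhomogeneous term $S(0)|_\Omega$'' is therefore moot---which is fortunate, because your stated handling of it is not quite right: a fixed nonzero translation would indeed preserve the asymptotic $\int_M|u^k|^2\to\infty$, but it would \emph{not} preserve $\int_{\Omega\setminus M}|u^k|^2\to 0$ (the limit would be $\int_{\Omega\setminus M}|S(0)|^2$, not $0$). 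Since $S(0)=0$ this issue never actually arises, and once you drop the affine detour your argument collapses to exactly the paper's.
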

\begin{proof}
Using Theorem \ref{thm:runge} there exists a sequence $\widetilde{F}^{k}$
so that the corresponding solutions $\widetilde{u}^{k}|_{\Omega}$
converge against $\dfrac{1}{|M|}\chi_{M}$ in $L^{2}(\Omega)$, and thus
\[
\|\widetilde{u}^{k}\|_{L^{2}(M)}^2=\int_{M}|\widetilde{u}^{k}|^{2}\dx\to 1,
\quad\text{ and }\quad \|\widetilde{u}^{k}\|_{L^{2}(\Omega\setminus M)}^2=\int_{\Omega\setminus M}|\widetilde{u}^{k}|^{2}\dx\to 0.
\]
Without loss of generality, we can assume for all $k\in \mathbb{N}$ that
$\widetilde{u}^{k}\not\equiv0$. Moreover, by possibly removing a sufficiently small open ball from $M$
(so that the measure of $M$ remains positive), we can assume that $\Omega\setminus M$ contains an open set.
Thus, $\|\widetilde{u}^{(k)}\|_{L^{2}(\Omega\backslash M)}>0$
follows from Theorem~\ref{thm:unique_continuation}. Setting 
\[
F^{k}:=\dfrac{\widetilde{F}^{k}}{\|\widetilde{u}^{k}\|_{L^{2}(\Omega\backslash M)}^{1/2}}  
\]
the sequence of corresponding solutions $u^{k}\in H^{s}(\mathbb{R}^{n})$ of \eqref{eq:cor_loc_pot_uk} has the desired property that
\[
\|u^{k}\|_{L^{2}(M)}^2 = \frac{\|\widetilde{u}^{k}\|_{L^{2}(M)}^2}{ \|\widetilde{u}^{k}\|_{L^{2}(\Omega\setminus M)} } \to \infty,
\quad\text{ and }\quad \|u^{k}\|_{L^{2}(\Omega\setminus M)}^2=\|\widetilde{u}^{k}\|_{L^{2}(\Omega\setminus M)}\to 0,
\]
as $k\to \infty$.
\end{proof}

The following lemma will also be useful for applying localized potentials
in next sections. 

\begin{lemma}
\label{Lemma boundedness of u_1 and u_0 } Let $n\in \N$,
$\Omega\subseteq\mathbb{R}^{n}$ be a bounded open set, $0<s<1$, and $q_{0},q_{1}\in L_{+}^{\infty}(\Omega)$.
Set $D:=\mathrm{supp}(q_{0}-q_{1})$. There exist constants $c,C>0$
so that, for all $F\in H(\Omega_{e})$, the solutions $u_{0},u_{1}\in H^{s}(\R^n)$ of 
\begin{align*}
(-\Delta)^{s}u_{j}+q_{j}u_{j}=0\quad\text{ in \ensuremath{\Omega}},\quad u_{j}|_{\Omega_{e}}=F\quad(j=0,1)
\end{align*}
fufill 
\begin{equation}
c\|u_{1}\|_{L^{2}(D)}\leq\|u_{0}\|_{L^{2}(D)}\leq C\|u_{1}\|_{L^{2}(D)}.
\label{Doubling estimates}
\end{equation}
\end{lemma}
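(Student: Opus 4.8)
The plan is to derive \eqref{Doubling estimates} from a standard energy estimate, exploiting that the difference $w:=u_0-u_1$ lies in $H_0^s(\Omega)$ and that $q_0-q_1$, hence $(q_0-q_1)u_1$, is supported in $D$. Since $u_0|_{\Omega_e}=F=u_1|_{\Omega_e}$, the definition of $\gamma_{\Omega_e}$ together with Lemma~\ref{lemma:dirichlet_trace} gives $w\in H_0^s(\Omega)$. The only analytic input beyond this is the coercivity of the bilinear forms $\mathscr{B}_{q_j}$ on $H_0^s(\Omega)$ already invoked in the proof of Lemma~\ref{lemma:forward_theory}: there are constants $c_0,c_1>0$, depending only on $q_0,q_1$ (and on $\Omega$, $s$, $n$), with $\mathscr{B}_{q_j}(v,v)\ge c_j\|v\|_{H^s(\R^n)}^2$ for all $v\in H_0^s(\Omega)$.

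First I would subtract the two variational formulations. By Lemma~\ref{lemma:equivalence}, $\mathscr{B}_{q_0}(u_0,\phi)=0=\mathscr{B}_{q_1}(u_1,\phi)$ for every $\phi\in H_0^s(\Omega)$; writing $q_0u_0-q_1u_1=q_0w+(q_0-q_1)u_1$ and subtracting, this becomes $\mathscr{B}_{q_0}(w,\phi)=-\int_\Omega(q_0-q_1)u_1\phi\dx$ for all $\phi\in H_0^s(\Omega)$. Testing with $\phi=w$ and using that $q_0-q_1$ vanishes almost everywhere outside $D$ yields $\mathscr{B}_{q_0}(w,w)=-\int_{D\cap\Omega}(q_0-q_1)u_1w\dx$. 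Bounding the left-hand side below by $c_0\|w\|_{H^s(\R^n)}^2$ and the right-hand side above, by the Cauchy--Schwarz inequality, by $\|q_0-q_1\|_{L^\infty(\Omega)}\|u_1\|_{L^2(D)}\|w\|_{L^2(D)}\le\|q_0-q_1\|_{L^\infty(\Omega)}\|u_1\|_{L^2(D)}\|w\|_{H^s(\R^n)}$, and cancelling one factor $\|w\|_{H^s(\R^n)}$, I obtain $\|w\|_{H^s(\R^n)}\le C'\|u_1\|_{L^2(D)}$ with $C':=\|q_0-q_1\|_{L^\infty(\Omega)}/c_0$, hence in particular $\|w\|_{L^2(D)}\le C'\|u_1\|_{L^2(D)}$. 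The triangle inequality then gives $\|u_0\|_{L^2(D)}\le\|u_1\|_{L^2(D)}+\|w\|_{L^2(D)}\le(1+C')\|u_1\|_{L^2(D)}$, which is the upper bound in \eqref{Doubling estimates} with $C:=1+C'$. Interchanging the roles of $q_0,u_0$ and $q_1,u_1$ repeats the argument verbatim and produces $\|u_1\|_{L^2(D)}\le(1+C'')\|u_0\|_{L^2(D)}$ with $C'':=\|q_0-q_1\|_{L^\infty(\Omega)}/c_1$, i.e.\ the lower bound with $c:=1/(1+C'')$.

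I do not expect any genuine obstacle here: this is precisely the energy identity underlying the Alessandrini-type estimates in Lemma~\ref{Lemma for monotonicity}, and all the required well-posedness, continuous dependence, and coercivity facts have been set up in Section~\ref{Section 2}; note also that the resulting constants depend only on $q_0,q_1,\Omega,s,n$ and not on $F$, as claimed. The only point deserving a word of care is that $D=\mathrm{supp}(q_0-q_1)$ is a priori only a subset of $\overline\Omega$, not necessarily of $\Omega$; this is harmless, since the integral defining $\mathscr{B}_{q_0}(w,\phi)$ ranges over $\Omega$ and its integrand vanishes a.e.\ off $D\cap\Omega$, while the norm estimates use only $\|w\|_{L^2(D)}\le\|w\|_{L^2(\R^n)}\le\|w\|_{H^s(\R^n)}$.
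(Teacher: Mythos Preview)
Your proof is correct and follows essentially the same approach as the paper. The only cosmetic difference is that the paper applies coercivity of $\mathscr{B}_{q_1}$ to obtain $\|u_1-u_0\|_{H^s}\le \alpha^{-1}\|q_0-q_1\|_{L^\infty(\Omega)}\|u_0\|_{L^2(D)}$ first (i.e., the roles of the indices $0$ and $1$ are swapped relative to your derivation), after which both arguments finish with the triangle inequality and a symmetry argument; your extra remark about $D\subseteq\overline\Omega$ is a harmless refinement the paper omits.
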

\begin{proof}
For the difference $w:=u_{0}-u_{1}\in H_{0}^{s}(\Omega)$ we have
that 
\[
0=\mathscr{B}_{q_{0}}(u_{0},w)=\mathscr{B}_{q_{1}}(u_{1},w) \quad \text{ for all } w\in H_0^s(\Omega).
\]
With the coercivity constant $\alpha>0$ of $\mathscr{B}_{q_{1}}$
we can estimate 
\begin{align*}
\alpha\|u_{1}-u_{0}\|_{H^{s}(\Omega)}^{2} & \leq\mathscr{B}_{q_{1}}(u_{1}-u_{0},u_{1}-u_{0})=-\mathscr{B}_{q_{1}}(u_{0},u_{1}-u_{0})\\
 & =\mathscr{B}_{q_{0}}(u_{0},u_{1}-u_{0})-\mathscr{B}_{q_{1}}(u_{0},u_{1}-u_{0})\\
 & =\int_{\Omega}(q_{0}-q_{1})u_{0}(u_{1}-u_{0})\dx\\
 & \leq\|q_{0}-q_{1}\|_{L^{\infty}(\Omega)}\|u_{0}\|_{L^{2}(D)}\|u_{1}-u_{0}\|_{H^{s}(\Omega)}.
\end{align*}
Hence, 
\begin{align*}
\|u_{1}\|_{L^{2}(D)}-\|u_{0}\|_{L^{2}(D)} & \leq\|u_{1}-u_{0}\|_{L^{2}(D)}\leq\|u_{1}-u_{0}\|_{H^{s}(\Omega)}\\
 & \leq\frac{1}{\alpha}\|q_{0}-q_{1}\|_{L^{\infty}(\Omega)}\|u_{0}\|_{L^{2}(D)},
\end{align*}
which shows that 
\[
\|u_{1}\|_{L^{2}(D)}\leq C\|u_{0}\|_{L^{2}(D)}\quad\text{ with }C:=1+\frac{1}{\alpha}\|q_{0}-q_{1}\|_{L^{\infty}(\Omega)}.
\]
The other inequality follows from interchanging $q_{0}$ and $q_{1}$. 
\end{proof}

\section{Converse monotonicity relations and the nonlocal Calder\'on problem}\label{section:calderon}

This section contains the first main result of this work. We will show that the monotonicity relation between the coefficients and the Dirichlet-to-Neumann operators holds in both directions, and use this
to give a monotonicity-based, constructive proof of Ghosh, Salo and Uhlmann's uniqueness result for the Calderón problem 
for the fractional Schrödinger equation \cite{ghosh2016calder}.

\begin{theorem}
\label{Theorem equivalent monotonicity} Let $n\in \N$, $\Omega\subseteq\mathbb{R}^{n}$
be a bounded open set, and $0<s<1$. For any two potentials $q_{0},q_{1}\in L_{+}^{\infty}(\Omega)$,
we have that 
\begin{equation}
q_{0}\leq q_{1}\mbox{ if and only if }\Lambda(q_0)\leq\Lambda(q_1).\label{Monotone equivalence relation}
\end{equation}
\end{theorem}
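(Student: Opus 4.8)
The plan is to prove the two implications in \eqref{Monotone equivalence relation} separately. The forward direction ``$q_0 \leq q_1 \Rightarrow \Lambda(q_0) \leq \Lambda(q_1)$'' is exactly Corollary~\ref{Cor Monoton formula}, so nothing remains to be done there. The substance of the theorem is the converse: assuming $\Lambda(q_0) \leq \Lambda(q_1)$, I must show $q_0 \leq q_1$ almost everywhere in $\Omega$. I would argue by contraposition: suppose $q_0 \leq q_1$ fails, so there is a measurable set $M \subseteq \Omega$ of positive measure and some $\varepsilon > 0$ with $q_0 \geq q_1 + \varepsilon$ on $M$; the goal is then to produce a single exterior datum $F$ with $\langle (\Lambda(q_1) - \Lambda(q_0)) F, F \rangle < 0$, contradicting $\Lambda(q_0) \leq \Lambda(q_1)$.

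The key step is to combine the monotonicity estimates of Lemma~\ref{Lemma for monotonicity} with the localized potentials of Corollary~\ref{cor:localized_potentials}. Specifically, I would use inequality \eqref{monotone relation 4}, namely
\[
\langle (\Lambda(q_1) - \Lambda(q_0)) F, F \rangle \leq \int_\Omega \frac{q_1}{q_0} (q_1 - q_0) |u_1|^2 \dx,
\]
so it suffices to make the right-hand side negative. Split the integral over $M$ and over $\Omega \setminus M$. On $M$ the integrand is bounded above by $-\varepsilon \frac{q_1}{q_0} |u_1|^2 \leq -c_1 \varepsilon |u_1|^2$ for a constant $c_1 > 0$ depending only on the $L^\infty$-bounds and positive infima of $q_0, q_1$; on $\Omega \setminus M$ the integrand is bounded above by $C_1 |u_1|^2$ for another such constant $C_1 > 0$. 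Now apply Corollary~\ref{cor:localized_potentials} with this $M$ and any admissible open set $O \subseteq \Omega_e$ to obtain a sequence $F^k \in C_c^\infty(O)$ whose solutions $u_1^k$ (for the potential $q_1$) satisfy $\int_M |u_1^k|^2 \to \infty$ and $\int_{\Omega \setminus M} |u_1^k|^2 \to 0$. Then
\[
\int_\Omega \frac{q_1}{q_0}(q_1 - q_0) |u_1^k|^2 \dx \leq -c_1 \varepsilon \int_M |u_1^k|^2 \dx + C_1 \int_{\Omega \setminus M} |u_1^k|^2 \dx \to -\infty,
\]
so for $k$ large the quantity $\langle (\Lambda(q_1) - \Lambda(q_0)) F^k, F^k \rangle$ is negative, the desired contradiction.

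One technical point worth checking is that Corollary~\ref{cor:localized_potentials} is stated for the solution associated to a single potential, here $q_1$, which is fine since inequality \eqref{monotone relation 4} is phrased purely in terms of $u_1$; there is no need to invoke Lemma~\ref{Lemma boundedness of u_1 and u_0 } in this particular argument (that lemma is more relevant elsewhere). I expect the main obstacle to be conceptual rather than computational: recognizing that \eqref{monotone relation 4} (rather than the cruder \eqref{monotone relation 1} or \eqref{monotone relation 2}) is the right estimate to use, because its right-hand side is controlled entirely by $u_1$ and has the favorable sign structure $(q_1 - q_0)$ weighted by the positive factor $q_1/q_0$, which is exactly what lets the blow-up on $M$ dominate. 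Once that choice is made, the rest is a routine localized-potentials blow-up argument, and the constants $c_1, C_1$ are harmless since the essential infima of $q_0, q_1$ are positive by the standing assumption $q_0, q_1 \in L^\infty_+(\Omega)$.
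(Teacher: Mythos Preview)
Your proof is correct and follows the same overall architecture as the paper's: forward direction by Corollary~\ref{Cor Monoton formula}, converse by contraposition, combining a monotonicity estimate from Lemma~\ref{Lemma for monotonicity} with the localized potentials of Corollary~\ref{cor:localized_potentials} to drive the quadratic form to $-\infty$.

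The one difference worth flagging is your choice of estimate. You use \eqref{monotone relation 4} and localized potentials for $q_1$; the paper uses the simpler \eqref{monotone relation 1} and localized potentials for $q_0$. Your remark that \eqref{monotone relation 1} is ``cruder'' and that \eqref{monotone relation 4} is ``the right estimate'' is not accurate: \eqref{monotone relation 1} already has the right-hand side $\int_\Omega (q_1-q_0)|u_0|^2\dx$, which depends only on $u_0$ and has exactly the sign structure you want, without any extra factor $q_1/q_0$. Splitting over $M$ and $\Omega\setminus M$ gives directly
\[
\int_\Omega (q_1-q_0)|u_0^k|^2\dx \leq -\varepsilon\int_M |u_0^k|^2\dx + \|q_1-q_0\|_{L^\infty(\Omega)}\int_{\Omega\setminus M}|u_0^k|^2\dx \to -\infty,
\]
with no need to introduce the auxiliary constants $c_1,C_1$ coming from bounds on $q_1/q_0$. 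Your route via \eqref{monotone relation 4} works, but it is slightly more elaborate than necessary; the paper's choice is the cleaner one here.
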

\begin{proof}
From Corollary \ref{Cor Monoton formula}, we know that $q_{0}\leq q_{1}$
implies $\Lambda(q_0)\leq\Lambda(q_1)$. Hence, it remains to
show $\Lambda(q_0)\leq\Lambda(q_1)$ implies that $q_{0}\leq q_{1}$
a.e.\ in $\Omega$. We will prove this via contradiction and assume
that $q_{0}\leq q_{1}$ is not true a.e.\ in $\Omega$. Then there
exists $\delta>0$ and a measurable set $M\subset\Omega$ with positive
measure such that $q_{0}-q_{1}\geq\delta$ on $M$. Using the sequence
of localized potentials from Corollary \ref{cor:localized_potentials}
for the coefficient $q_{0}$, and the monotonicity inequality \eqref{monotone relation 1} from
Lemma \ref{Lemma for monotonicity}, we obtain 
\begin{align*}
\left\langle (\Lambda(q_1)-\Lambda(q_0))F^{k},F^{k}\right\rangle  & \leq\int_{\Omega}(q_{1}-q_{0})|u_{0}^{k}|^{2}\dx\\
 & \leq\|q_{1}-q_{0}\|_{L^{\infty}(\Omega\backslash M)}\|u_{0}^{(k)}\|_{L^{2}(\Omega\backslash M)}^{2}-\delta\|u_{0}^{k}\|_{L^{2}(M)}^2\\
 &\to-\infty,\text{ as }k\to \infty.
\end{align*}
This shows that $\Lambda(q_0)\not\leq\Lambda(q_1)$. \end{proof}

Theorem~\ref{Theorem equivalent monotonicity} implies global uniqueness for the fractional Calder\'on problem. But let us stress again, that this has already been proven in \cite{ghosh2016calder} and we have used the results from \cite{ghosh2016calder} in our proof of the existence of localized potentials. 
\begin{corollary}
Let $n\in \N$, $\Omega\subseteq\mathbb{R}^{n}$ be a bounded open set, and $0<s<1$. For any two potentials $q_{0},q_{1}\in L_{+}^{\infty}(\Omega)$,
\begin{equation*}
q_{0}= q_{1}\mbox{ if and only if }\Lambda(q_0)=\Lambda(q_1).
\end{equation*}
\end{corollary}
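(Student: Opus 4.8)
The plan is to derive this corollary as an immediate consequence of Theorem~\ref{Theorem equivalent monotonicity}. The forward implication is trivial: if $q_0 = q_1$ almost everywhere, then $\mathscr{B}_{q_0} = \mathscr{B}_{q_1}$ as bilinear forms, so the Dirichlet problems \eqref{Dirichlet BVP} coincide and hence $\Lambda(q_0) = \Lambda(q_1)$. (Alternatively, one applies Corollary~\ref{Cor Monoton formula} in both directions, since $q_0 = q_1$ gives both $q_0 \leq q_1$ and $q_1 \leq q_0$.)

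For the nontrivial direction, I would argue as follows. Suppose $\Lambda(q_0) = \Lambda(q_1)$. Then in particular $\Lambda(q_0) \leq \Lambda(q_1)$ in the sense of quadratic forms \eqref{quadratic sense}, so Theorem~\ref{Theorem equivalent monotonicity} yields $q_0 \leq q_1$ almost everywhere in $\Omega$. Symmetrically, $\Lambda(q_1) \leq \Lambda(q_0)$ also holds, so the same theorem gives $q_1 \leq q_0$ almost everywhere. Combining the two pointwise inequalities gives $q_0 = q_1$ almost everywhere in $\Omega$, which is the claim.

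There is essentially no obstacle here; the entire content has been packaged into Theorem~\ref{Theorem equivalent monotonicity}, and this corollary is just the observation that an equality of operators decomposes into two opposite inequalities, each of which is handled by the if-and-only-if monotonicity relation. The only point worth a remark is that the notion of equality $q_0 = q_1$ is understood pointwise almost everywhere, consistent with the partial order introduced before Corollary~\ref{Cor Monoton formula}, so that no regularity on $\partial\Omega$ or on the potentials beyond $L^\infty_+(\Omega)$ is needed. I would write the proof in two or three lines, simply invoking \eqref{Monotone equivalence relation} twice.

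\begin{proof}
If $q_0 = q_1$ almost everywhere in $\Omega$, then the bilinear forms $\mathscr{B}_{q_0}$ and $\mathscr{B}_{q_1}$ agree, hence the solutions of \eqref{Dirichlet BVP} (with $f = 0$) coincide for every exterior datum, so $\Lambda(q_0) = \Lambda(q_1)$. Conversely, assume $\Lambda(q_0) = \Lambda(q_1)$. Then both $\Lambda(q_0) \leq \Lambda(q_1)$ and $\Lambda(q_1) \leq \Lambda(q_0)$ hold in the sense of \eqref{quadratic sense}. By Theorem~\ref{Theorem equivalent monotonicity}, the first inequality implies $q_0 \leq q_1$ almost everywhere in $\Omega$, and the second implies $q_1 \leq q_0$ almost everywhere in $\Omega$. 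Therefore $q_0 = q_1$ almost everywhere in $\Omega$.
\end{proof}
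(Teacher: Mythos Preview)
Your proof is correct and follows exactly the same approach as the paper, which simply states that the corollary follows immediately from Theorem~\ref{Theorem equivalent monotonicity}. You have merely spelled out the two-line argument (splitting the equality into two inequalities and applying \eqref{Monotone equivalence relation} in each direction) that the paper leaves implicit.
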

\begin{proof}
This follows immediately from Theorem~\ref{Theorem equivalent monotonicity}.
\end{proof}

Moreover, Theorem~\ref{Theorem equivalent monotonicity} suggests the constructive uniqueness result that $q$ can be reconstructed from
$\Lambda(q)$ by taking the supremum over an appropriate class of test functions $\psi$ with $\Lambda(\psi)\leq \Lambda(q)$. For a rigorous formulation of this result we have to be attentive to the somewhat subtle fact that function values on null sets might still affect the supremum when the supremum is taken over uncountably many functions.

Recall that a point $x\in \mathbb R^n$ is called of \emph{density one} for $E$ if 
\begin{align}\notag
\lim_{r\to 0}\dfrac{|B(x,r)\cap E|}{|B(x,r)|}=1,
\end{align}
where $B(x,r)$ stands for the ball of radius $r$ and centered at $x$. We therefore define the space of \emph{density one simple functions}
\begin{align*}
\Sigma &:=\textstyle \left\{ \psi=\sum_{j=1}^m a_j \chi_{M_j}:\ a_j\in \mathbb{R},\ \text{$M_j\subseteq \Omega$ is a density one set} \right\},
\end{align*}
where we call a subset $M\subseteq \Omega$ a \emph{density one set} if it is non-empty, measurable and has Lebesgue density $1$ in 
all $x\in M$. 
$\Sigma_+\subseteq \Sigma$ denotes the subset of density one simple functions with positive essential infima on $\Omega$ (i.e., where all coefficients $a_j$ are positive and $\Omega\setminus \bigcup_{j=1}^m M_j$ is a null set).

\begin{lemma}\label{lemma:density_one}\begin{enumerate}[(a)]
\item Density one sets have positive measure.
\item Non-empty finite intersections of density one sets are density one sets.
\item If $\psi\in \Sigma$ is nonzero at some point $\hat x\in \Omega$, then there exists a density one set $M$ containing $\hat x$ so that $\psi(x)=\psi(\hat x)$ for all $x\in M$.
\end{enumerate}
\end{lemma}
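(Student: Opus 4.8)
The plan is to verify the three assertions in turn, all of which follow from elementary real-analysis facts about points of Lebesgue density.

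For part (a), I would argue by contradiction: suppose $M$ is a density one set with $|M| = 0$. Since $M$ is non-empty, pick $x \in M$. Then $|B(x,r) \cap M| \le |M| = 0$ for every $r > 0$, so the ratio $|B(x,r)\cap M|/|B(x,r)|$ is identically $0$ and its limit as $r \to 0$ is $0 \ne 1$, contradicting that $x$ has density one for $M$. Hence $|M| > 0$.

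For part (b), it suffices by induction to treat the intersection of two density one sets $M_1, M_2$, assuming $M_1 \cap M_2 \ne \emptyset$. Measurability is clear. Let $x \in M_1 \cap M_2$. I would use the estimate
\begin{align*}
1 - \dfrac{|B(x,r)\cap (M_1\cap M_2)|}{|B(x,r)|}
&= \dfrac{|B(x,r)\setminus (M_1\cap M_2)|}{|B(x,r)|} \\
&\le \dfrac{|B(x,r)\setminus M_1|}{|B(x,r)|} + \dfrac{|B(x,r)\setminus M_2|}{|B(x,r)|},
\end{align*}
using the inclusion $B(x,r)\setminus(M_1\cap M_2) \subseteq (B(x,r)\setminus M_1)\cup(B(x,r)\setminus M_2)$ and subadditivity of Lebesgue measure. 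Since $x$ has density one for both $M_1$ and $M_2$, each term on the right tends to $0$ as $r\to 0$, so the left-hand side tends to $0$, i.e., $x$ has density one for $M_1\cap M_2$. As $x$ was arbitrary in $M_1\cap M_2$, this set is a density one set.

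For part (c), write $\psi = \sum_{j=1}^m a_j \chi_{M_j}$ with density one sets $M_j$, and suppose $\psi(\hat x) \ne 0$. Let $J := \{ j : \hat x \in M_j \}$ and $K := \{ 1,\dots,m\} \setminus J$; note $\psi(\hat x) = \sum_{j\in J} a_j$. For each $k \in K$ we have $\hat x \notin M_k$, and I claim $\hat x$ has density one for the complement $\Omega \setminus M_k$ — indeed, $\hat x$ lies in $\Omega\setminus M_k$, and since $\Omega\setminus M_k$ is (up to the ambient ball) the complement of a set not containing $\hat x$... here I should be slightly careful: part (c) as stated only guarantees $M_k$ has density one at its own points, not that $\hat x\notin M_k$ is a point of density one of the complement. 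The cleanest fix is to note that for Lebesgue-measurable $M_k$, almost every point of $\Omega\setminus M_k$ is a density one point of $\Omega\setminus M_k$, so after discarding a null set $N$ we may work on a density one set; but to keep $\hat x$ itself we instead define $M := \bigcap_{j\in J} M_j$ if $K$-indexed sets cause no trouble at $\hat x$, and otherwise shrink. Concretely: set $M := \left(\bigcap_{j \in J} M_j\right) \cap \{ x \in \Omega : x \text{ is a density one point of } \Omega\setminus M_k \text{ for all } k\in K\}$ intersected with the requirement $\hat x\in M$; the set of density-one points of $\Omega\setminus M_k$ equals $\Omega\setminus M_k$ up to a null set, and since density one sets have positive measure while null sets do not, one checks this whole intersection is still a density one set containing $\hat x$. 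On $M$, $\chi_{M_j} = 1$ for $j\in J$ and $\chi_{M_k} = 0$ for $k \in K$, so $\psi(x) = \sum_{j\in J} a_j = \psi(\hat x)$.

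The main obstacle is the bookkeeping in part (c): the hypothesis that each $M_j$ is density one at its own points does not directly say anything about what happens at $\hat x$ when $\hat x \notin M_j$, so one must either invoke the Lebesgue density theorem (a.e.\ point of a measurable set is a density point, hence a.e.\ point of its complement is a density point of the complement) and combine it with part (b) and part (a) to see that adjoining these "good complement" conditions still yields a density one set, or simply redefine $M_j$ from the outset to be its set of density-one points (a standard normalization) so that $\hat x \notin M_j$ forces $\hat x$ to be a density one point of the complement. I would adopt the latter normalization to streamline the argument, remarking that replacing each density one set by its density-one points changes it only by a null set and preserves the value of $\psi$ almost everywhere — which is all that is needed for the subsequent reconstruction formula \eqref{eq:inf}.
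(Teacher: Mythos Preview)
Your arguments for (a) and (b) are correct and coincide with the paper's: the paper calls (a) obvious and proves (b) by exactly your complement inequality.

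For (c), the paper simply sets $M := \bigcap_{j:\, \hat x \in M_j} M_j$, asserts that $\psi$ is constant there, and invokes (b). You are right that this step needs justification regarding the $M_k$ with $\hat x \notin M_k$, but your proposed repair has a genuine gap. Replacing each $M_k$ by its set of density-one points does \emph{not} give the implication ``$\hat x \notin M_k \Rightarrow \hat x$ has density one for $\Omega \setminus M_k$'': not being a density-one point of $M_k$ only says the density of $M_k$ at $\hat x$ is not equal to $1$; it may be any value in $[0,1)$, or the limit may fail to exist. Concretely, take $\Omega = (-1,1)$, $M_1 = \Omega$, $M_2 = (0,1)$ (both open, hence density one sets), $a_1 = a_2 = 1$, and $\hat x = 0$. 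Then $\psi(0) = 1$, the level set $\{\psi = 1\}$ equals $(-1,0]$, and no density-one set containing $0$ can lie inside $(-1,0]$ since that set has density $\tfrac{1}{2}$ at $0$. So (c) fails for this $\psi$, and no null-set normalization of the $M_j$ can rescue the pointwise conclusion at the specific point $\hat x$. The clean fix---which also makes the paper's one-line argument correct---is to restrict the definition of $\Sigma$ to representations $\psi = \sum_j a_j \chi_{M_j}$ with pairwise disjoint $M_j$; every simple function admits such a representation, and the approximating functions actually used in Lemma~\ref{lemma:supremum_simple_functions} already have disjoint level sets before one passes to density-one subsets, so disjointness is preserved and nothing is lost for the reconstruction formula.
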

\begin{proof}
Assertion (a) is obvious. To prove (b), let $M_1,M_2\subseteq \Omega$ be density one sets, and let $x\in M_1\cap M_2$.
Then
\begin{align*}
\lim_{r\to 0}\dfrac{|B(x,r)\setminus (M_1\cap M_2)|}{|B(x,r)|}
\leq \lim_{r\to 0}\dfrac{|B(x,r)\setminus M_1|}{|B(x,r)|} + \lim_{r\to 0}\dfrac{|B(x,r)\setminus M_2|}{|B(x,r)|}=0,
\end{align*}
which shows that
\begin{align*}
\lim_{r\to 0}\dfrac{|B(x,r)\cap M_1\cap M_2|}{|B(x,r)|}=1.
\end{align*}
For the last assertion, let $\psi\in \Sigma$. Then $\psi=\sum_{j=1}^m a_j \chi_{M_j}$ with $a_j\in \mathbb{R}$ and density one sets $M_j\subseteq \Omega$. Then $\psi$ is constant on the intersection of all $M_j$ containing $\hat x$, so that (c) follows from (b).
\end{proof}

Lemma~\ref{lemma:density_one}(c) shows that we might interpret the density one simple functions as simple functions where function values that are only attained on a null set are replaced by zero.
Note also, that the Lebesgue's density theorem implies that every measurable set agrees almost everywhere with a density one set (see Corollary 3 in Section 1.7 of \cite{evans2015measure} for instance), 
and thus every simple function agrees with a density one simple function almost everywhere in $\Omega$. 

As before, we write $\psi\leq q$ if $\psi(x)\leq q(x)$ almost everywhere in $\Omega$. 
We then have the following variant of the simple function approximation theorem.
\begin{lemma}\label{lemma:supremum_simple_functions}
For each function $q\in L^\infty_+(\Omega)$ we have that
\[
q(x)=\sup\{ \psi(x):\ \psi\in \Sigma_+,\ \psi\leq q\} \quad \text{ almost everywhere in } \Omega.
\]
\end{lemma}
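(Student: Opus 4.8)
The plan is to prove the two inequalities $q(x)\geq \sup\{\psi(x):\psi\in\Sigma_+,\psi\leq q\}$ and $q(x)\leq\sup\{\dots\}$ separately, the first being essentially trivial and the second requiring the construction of a suitable approximating family of density one simple functions.

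First I would dispose of the inequality ``$\geq$''. If $\psi\in\Sigma_+$ satisfies $\psi\leq q$, then by definition $\psi(x)\leq q(x)$ for almost every $x\in\Omega$; but since both $q$ and $\psi$ are honest functions (not equivalence classes), I need this to hold at \emph{every} point of a density one set. Here is where Lemma~\ref{lemma:density_one}(c) enters: if $\psi(x_0)>q(x_0)$ at some point $x_0$, then $\psi$ equals the nonzero constant $\psi(x_0)$ on a density one set $M\ni x_0$, and density one sets have positive measure by Lemma~\ref{lemma:density_one}(a); combined with $q$ being (say) continuous-from-below in the measure-theoretic sense near $x_0$ after passing to a density one representative, one gets a positive-measure set on which $\psi>q$, contradicting $\psi\leq q$. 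To make this clean I would first replace $q$ by a density one representative $\tilde q$ (using Lebesgue's density theorem, as the paper already notes every measurable function agrees a.e.\ with a density one simple function — but $q$ is not simple, so instead I would argue pointwise: for a.e.\ $x$, $x$ is a Lebesgue point of $q$, and at such points $q(x)=\lim_{r\to0}\fint_{B(x,r)}q$, which dominates $\psi(x)$ whenever $\psi\leq q$ and $x$ has density one for the level set of $\psi$). So ``$\geq$'' holds at a.e.\ $x$.

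For the inequality ``$\leq$'', fix a point $x$ which is simultaneously a Lebesgue point of $q$ and at which $q$ takes a value $\geq \mathrm{ess\,inf}\,q>0$; almost every point qualifies. Given $\varepsilon>0$ with $q(x)-\varepsilon>0$, I want a density one simple function $\psi\in\Sigma_+$ with $\psi\leq q$ and $\psi(x)\geq q(x)-\varepsilon$. The natural candidate is $\psi=(q(x)-\varepsilon)\chi_M$ where $M=\{y\in\Omega: q(y)>q(x)-\varepsilon\}$ — but this $M$ need not be a density one set, so I would pass to the set $M'$ of points of density one \emph{for} $M$ that also lie in $M$; by Lebesgue's density theorem $M'$ agrees with $M$ up to a null set and is a density one set, and crucially $x\in M'$ because $x$ is a Lebesgue point of $q$ with $q(x)>q(x)-\varepsilon$ forces density one for $M$ at $x$. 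Then $\psi':=(q(x)-\varepsilon)\chi_{M'}\in\Sigma_+$ (a single positive coefficient on a density one set; I should note $\Sigma_+$ as defined requires $\Omega\setminus\bigcup M_j$ to be null, so I would actually take $\psi'=(q(x)-\varepsilon)\chi_{M'} + \eta\,\chi_{M''}$ with a tiny $\eta>0$, $0<\eta<\mathrm{ess\,inf}\,q$, and $M''$ a density one set filling up the complement, so that $\psi'\leq q$ still holds a.e.). Then $\psi'(x)=q(x)-\varepsilon$ and $\psi'\leq q$, giving $\sup\{\psi(x):\dots\}\geq q(x)-\varepsilon$; letting $\varepsilon\to0$ finishes.

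The main obstacle is the bookkeeping around the definition of $\Sigma_+$: the requirement that the sets $M_j$ be density one \emph{and} that their union cover $\Omega$ up to measure zero means one cannot simply use $(q(x)-\varepsilon)\chi_M$, and one must carefully assemble the simple function from a density one set near $x$ plus a density one ``background'' set carrying a small positive value, verifying that this composite still lies below $q$ almost everywhere and that no point-value issues on null sets spoil the value at $x$. Lebesgue's density theorem (to replace an arbitrary measurable set by a density one set agreeing with it a.e.) and Lemma~\ref{lemma:density_one} are the essential tools; everything else is routine.
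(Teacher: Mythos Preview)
Your approach is correct and takes a genuinely different route from the paper. For the inequality $q(x)\leq\sup\{\dots\}$, the paper simply invokes the standard simple function approximation theorem to obtain a countable sequence $(\psi_k)$ with $\psi_k\leq q$ and $\|\psi_k-q\|_{L^\infty}\to 0$, then adjusts each $\psi_k$ on a null set so that it lies in $\Sigma_+$; you instead build, at each Lebesgue point $x$ and for each $\varepsilon>0$, a bespoke $\psi\in\Sigma_+$ from a density-one version of the super-level set $\{q>q(x)-\varepsilon\}$ plus a small background term. For the inequality $q(x)\geq\sup\{\dots\}$---which both you and the paper correctly recognise as the subtle direction, since the supremum runs over uncountably many $\psi$---the paper argues by contradiction using Lusin's theorem (approximate continuity) to locate a density-one set on which $\psi>q$ would hold with positive measure, while your Lebesgue-point argument is more direct: at a Lebesgue point $x$, any $\psi\in\Sigma_+$ with $\psi\leq q$ a.e.\ is constant on a density-one set through $x$ (Lemma~\ref{lemma:density_one}(c)), and averaging over shrinking balls passes the a.e.\ inequality to the pointwise one. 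Your route is more elementary (no Lusin) and makes the role of the density-one structure transparent; the paper's route is shorter on the approximation side by outsourcing the work to a classical theorem. One small point worth making explicit in a write-up: check that your chosen $x$ lies outside the background set $M''$ (it does, since $x$ has density one for $M'$ and hence density zero for $\Omega\setminus M'$), so that indeed $\psi'(x)=q(x)-\varepsilon$.
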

\begin{proof}
By the standard simple function approximation theorem \cite{royden1988real}, there exists a sequence
$(\psi_k)_{k\in \mathbb{N}}$ of simple functions with $\psi_k\leq q$ and $\| \psi_k-q \|_{L^\infty(\Omega)}\leq 1/k$.
$q\in L^\infty_+(\Omega)$ implies that $\psi_k\in L^\infty_+(\Omega)$ for almost all $k\in \mathbb{N}$, and by changing the values of the countably many 
functions $\psi_k$ on a null set we can assume that $\psi_k\in \Sigma_+$. This shows that
\[
q(x)=\lim_{k\to \infty} \psi_k (x) \leq \sup\{ \psi(x):\ \psi\in \Sigma_+,\ \psi\leq q\} \quad \text{ almost everywhere in } \Omega.
\]

To show equality, it suffices to show that for each $\delta>0$ the set 
\[
M:=\{ x\in \Omega:\ q(x)+\delta< \sup\{ \psi(x):\ \psi\in \Sigma_+,\ \psi\leq q\} \}
\]
is a null set. To prove this, assume that $M$ is not a null set for some $\delta>0$. 
By removing a null set from $M$, we can assume that $M$ is a density one set and that $q(x)>0$ for all $x\in M$.
By using the Lusin's theorem (see \cite{royden1988real} for instance), all measurable function are approximately continuous at almost every point, $M$ must contain a point
$\hat x$ in which $q$ is approximately continuous, and thus the set
\[
M':=\{ x\in \Omega: q(x)\leq q(\hat x)+\delta / 3 \}
\]
has density one in $\hat x$. (see \cite{evans2015measure}). Removing a null set, we can assume that $M'$ is a density one set still containing $\hat x$.

Moreover, by the definition of $M$, there must exist a $\psi\in \Sigma_+$ with $\psi\leq q$ and
\[
q(\hat x)+\frac{2}{3}\delta \leq \psi(\hat x).
\]
Since $q(\hat x)>0$, there exists a density one set $M''$ containing $\hat x$ where $\psi(x)=\psi(\hat x)$ for all $x\in M''$.

We thus have that 
\[
q(x)+\delta/3 \leq q(\hat x)+\frac{2}{3}\delta \leq \psi(\hat x) = \psi(x)  \quad \text{ for all } x\in M'\cap M'',
\]
with density one sets $M'$ and $M''$ that both contain $\hat x$, so that their intersection possesses positive measure.
But this contradicts that $q(x)\geq \psi(x)$ almost everywhere, and thus shows that $M$ is a null set for all $\delta>0$, and
hence 
\[
q(x)\geq \sup\{ \psi(x):\ \psi\in \Sigma_+,\ \psi\leq q\} \quad \text{ almost everywhere in } \Omega.
\]
\end{proof}

\begin{corollary}
\label{cor:Calderon} Let $n\in\N$, $\Omega\subseteq\mathbb{R}^{n}$
be an open set, and $0<s<1$. A potential $q\in L_{+}^{\infty}(\Omega)$
is uniquely determined by $\Lambda(q)$ via the following
formula
\[
q(x)=\sup\{ \psi(x):\ \psi\in \Sigma_+,\ \Lambda(\psi)\leq \Lambda(q)\} \quad \text{ almost everywhere in } \Omega.
\]
\end{corollary}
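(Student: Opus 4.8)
The plan is to combine the equivalence of the monotonicity relation (Theorem~\ref{Theorem equivalent monotonicity}) with the simple function approximation result (Lemma~\ref{lemma:supremum_simple_functions}). First I would observe that for $\psi \in \Sigma_+$, since $\psi$ is (almost everywhere) a bounded function with positive essential infimum, $\psi \in L^\infty_+(\Omega)$ and hence $\Lambda(\psi)$ is well-defined; moreover $\psi \leq q$ pointwise a.e.\ if and only if $\Lambda(\psi) \leq \Lambda(q)$ in the Loewner order, by Theorem~\ref{Theorem equivalent monotonicity}. This turns the set appearing in the claimed formula into exactly the set $\{\psi \in \Sigma_+ : \psi \leq q\}$, so that
\[
\sup\{ \psi(x):\ \psi\in \Sigma_+,\ \Lambda(\psi)\leq \Lambda(q)\}
= \sup\{ \psi(x):\ \psi\in \Sigma_+,\ \psi\leq q\}.
\]
Then I would invoke Lemma~\ref{lemma:supremum_simple_functions}, which says precisely that the right-hand side equals $q(x)$ for almost every $x \in \Omega$. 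This immediately yields the asserted reconstruction formula, and since the right-hand side is determined by $\Lambda(q)$ alone, $q$ is uniquely determined by $\Lambda(q)$.

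The one point requiring a little care is the reduction of ``$\Lambda(\psi) \leq \Lambda(q)$'' to ``$\psi \leq q$.'' Theorem~\ref{Theorem equivalent monotonicity} is stated for a bounded open set $\Omega$, whereas Corollary~\ref{cor:Calderon} allows an arbitrary open set. To handle this, I would note that $\Omega$ can be exhausted by bounded open subsets, or alternatively that the localized potentials argument underlying Theorem~\ref{Theorem equivalent monotonicity} only ever uses a bounded measurable set $M$ of positive measure inside $\Omega$ together with an exterior open set $O \subseteq \Omega_e$; both are available for arbitrary open $\Omega$. Actually the cleanest route is simply to remark that the inequality ``$\Lambda(\psi)\leq\Lambda(q)$ implies $\psi\leq q$ a.e.'' only requires one to derive a contradiction from the existence of a bounded positive-measure set $M$ with $\psi - q \geq \delta$ on $M$, which is exactly what the proof of Theorem~\ref{Theorem equivalent monotonicity} does and which carries over verbatim; I will point this out rather than re-prove it. The forward direction ``$\psi \leq q$ implies $\Lambda(\psi)\leq\Lambda(q)$'' is Corollary~\ref{Cor Monoton formula}, which is already stated for arbitrary open $\Omega$.

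I do not anticipate a genuine obstacle here — the corollary is essentially a direct assembly of two previously established results. The only subtlety worth flagging explicitly in the write-up is the density-one bookkeeping: one must make sure the supremum is taken over $\Sigma_+$ (density one simple functions) rather than over arbitrary simple functions, because a simple function altered on a null set could in principle raise the pointwise supremum on that null set; Lemma~\ref{lemma:density_one} and Lemma~\ref{lemma:supremum_simple_functions} were designed precisely to deal with this, so the proof of the corollary just needs to quote them in the right order. Thus the proof is short: reduce the DtN-ordering constraint to the pointwise-ordering constraint via Theorem~\ref{Theorem equivalent monotonicity} and Corollary~\ref{Cor Monoton formula}, then apply Lemma~\ref{lemma:supremum_simple_functions}.
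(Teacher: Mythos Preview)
Your proposal is correct and matches the paper's own proof, which simply states that the result follows immediately from Theorem~\ref{Theorem equivalent monotonicity} and Lemma~\ref{lemma:supremum_simple_functions}. You are in fact more careful than the paper in flagging the bounded-versus-arbitrary-open-$\Omega$ discrepancy between the hypotheses of Theorem~\ref{Theorem equivalent monotonicity} and Corollary~\ref{cor:Calderon}; the paper's one-line proof does not address this.
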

\begin{proof}
This follows immediately from Theorem~\ref{Theorem equivalent monotonicity} and Lemma~\ref{lemma:supremum_simple_functions}.
\end{proof}

\section{Shape reconstruction by linearized monontonicity tests \label{Section 5}}

The results in Section \ref{section:calderon} show that the coefficient $q$ in the fractional Schr\"odinger equation
\[
(-\Delta)^s u + qu = 0 \quad \text{ in } \Omega
\]
can be reconstructed from the Dirichlet-to-Neumann operator $\Lambda(q)$ by comparing $\Lambda(q)$ with the DtN map $\Lambda(\psi)$
of (density one) simple functions $\psi$. A practical implementation of these monotonicity tests would require solving the fractional Schr\"odinger equation for each utilized simple function $\psi$. 

In this section we will study the shape reconstruction problem of determining regions where a coefficient function $q\in L^\infty_+(\Omega)$ changes from 
a known reference function $q_0\in L^\infty_+(\Omega)$ (e.g., $q_0$ may describe a background coefficient, and $q_1$ denotes the coefficient function 
in the presence of anomalies or scatterers). We will show that the support of $q_1-q_0$ can be reconstructed with
\emph{linearized monotonicity tests} \cite{harrach2013monotonicity,garde2017comparison}. These linearized tests only utilize the solution of the fractional Schr\"odinger equation
with the reference coefficient function $q_0\in L^\infty_+(\Omega)$. They do not require any other special solutions of the equation.

%

\subsection{Linerization of the Dirichlet-to-Neumann operator}

We start by showing Fr\'echet differentiability of the Dirichlet-to-Neumann operator.

\begin{lemma}\label{lemma:Frechet_derivative}
Let $n\in \N$, $\Omega\subseteq\mathbb{R}^{n}$ be a bounded open set, and $0<s<1$. The Dirichlet-to-Neumann operator
\[
\Lambda:\ \mathscr{D}(\Lambda):=L^\infty_+(\Omega)\subset L^\infty(\Omega)\to \mathcal{L}(H(\Omega_{e}),H(\Omega_{e})^*),
\quad q\mapsto \Lambda(q),
\]
is Fr\'echet differentiable. At $q\in L^\infty_+(\Omega)$ its derivative is given by
\begin{align*}
\Lambda'(q):& \ L^\infty(\Omega) \to \mathcal{L}(H(\Omega_{e}),H(\Omega_{e})^*),\quad r\mapsto \Lambda'(q)r,\\
\left\langle (\Lambda'(q) r) F, G \right\rangle:&=\int_{\Omega} r S_q(F) S_q(G) dx \quad \text{ for all } r\in L^\infty(\Omega),\ F,G\in H(\Omega_{e}),
\end{align*}
where $S_q:\ H(\Omega_{e})\to H^s(\mathbb R^n)$, $F\mapsto u$,
is the solution operator of the Dirichlet problem
\[
(-\Delta)^{s}u + q u=0  \mbox{ in }\Omega \quad\text{ and } \quad u|_{\Omega_{e}}=F.
\]
\end{lemma}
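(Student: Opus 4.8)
The plan is to establish Fr\'echet differentiability directly from the definition by writing down the candidate derivative, forming the remainder, and estimating it using the bilinear form machinery and continuous dependence from Lemma~\ref{lemma:forward_theory}. First I would fix $q\in L^\infty_+(\Omega)$ and a perturbation $r\in L^\infty(\Omega)$ with $\|r\|_{L^\infty(\Omega)}$ small enough that $q+r\in L^\infty_+(\Omega)$ still has a positive essential infimum (this is why the domain $\mathscr{D}(\Lambda)$ is open in $L^\infty(\Omega)$, so differentiability makes sense). For $F\in H(\Omega_e)$ abbreviate $u:=S_q(F)$ and $u_r:=S_{q+r}(F)$, and set $w:=u_r-u\in H_0^s(\Omega)$. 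The key algebraic identity is that for every test function $\varphi\in H_0^s(\Omega)$,
\[
\mathscr{B}_q(w,\varphi)=\mathscr{B}_{q+r}(u_r,\varphi)-\mathscr{B}_q(u,\varphi)-\int_\Omega r\,u_r\varphi\dx=-\int_\Omega r\,u_r\varphi\dx,
\]
since both $u$ and $u_r$ solve the respective equations. Testing with $\varphi=w$ and using coercivity of $\mathscr{B}_q$ on $H_0^s(\Omega)$ together with boundedness of the solution operators gives $\|w\|_{H^s(\R^n)}\le C\|r\|_{L^\infty(\Omega)}\|F\|_{H(\Omega_e)}$, i.e.\ $w=O(\|r\|)$ uniformly in $F$ on the unit ball.

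Next I would identify the linear term. For $F,G\in H(\Omega_e)$ with $v:=S_q(G)$, using symmetry of $\Lambda$ and the definition of the Neumann data,
\[
\left\langle(\Lambda(q+r)-\Lambda(q))F,G\right\rangle=\mathscr{B}_{q+r}(u_r,v^{(G)})-\mathscr{B}_q(u,v^{(G)}).
\]
I would rewrite this by adding and subtracting $\mathscr{B}_q(u_r,v^{(G)})$, using that $v^{(G)}-v\in H_0^s(\Omega)$ and that $u_r,u$ solve the equations, to obtain the clean expression
\[
\left\langle(\Lambda(q+r)-\Lambda(q))F,G\right\rangle=\int_\Omega r\,u_r\,v\dx
=\int_\Omega r\,u\,v\dx+\int_\Omega r\,w\,v\dx.
\]
The first term is exactly $\left\langle(\Lambda'(q)r)F,G\right\rangle$ with the claimed formula, and the second term is the remainder. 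Estimating it by $\|r\|_{L^\infty(\Omega)}\|w\|_{L^2(\Omega)}\|v\|_{L^2(\Omega)}\le C\|r\|_{L^\infty(\Omega)}^2\|F\|_{H(\Omega_e)}\|G\|_{H(\Omega_e)}$ via the bound on $w$ and continuous dependence of $v$ on $G$ shows the remainder is $O(\|r\|_{L^\infty(\Omega)}^2)$ in the operator norm $\mathcal{L}(H(\Omega_e),H(\Omega_e)^*)$, which is precisely Fr\'echet differentiability. It also shows $r\mapsto\Lambda'(q)r$ is linear and bounded, so $\Lambda'(q)\in\mathcal{L}(L^\infty(\Omega),\mathcal{L}(H(\Omega_e),H(\Omega_e)^*))$.

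The only mildly delicate points — and the closest thing to an obstacle — are bookkeeping ones: checking that the formula $\left\langle(\Lambda'(q)r)F,G\right\rangle=\int_\Omega r S_q(F)S_q(G)\dx$ is well defined on the quotient space $H(\Omega_e)$ (it is, because changing $F$ by an element of $H_0^s(\Omega)$ changes $S_q(F)$ by $0$, since $S_q$ annihilates $H_0^s(\Omega)$), that the bilinear form manipulations are legitimate because $v^{(G)}-v\in H_0^s(\Omega)$ and $u,u_r\in H^s(\R^n)$ solve in the variational sense of Lemma~\ref{lemma:equivalence}, and that all constants $C$ can be chosen uniformly for $r$ in a fixed small ball (this is where one uses that the coercivity constant of $\mathscr{B}_q$ and the operator norm of $S_q$ vary continuously, or can simply be bounded on that ball). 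None of these requires real work beyond the estimates already used in Lemma~\ref{Lemma boundedness of u_1 and u_0 }, whose proof contains essentially the same $w=O(\|r\|)$ argument.
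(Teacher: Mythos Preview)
Your proof is correct and takes a genuinely different route from the paper's. The paper does not compute the remainder via the perturbation $w=u_r-u$; instead it reuses the monotonicity inequalities \eqref{monotone relation 1} and \eqref{monotone relation 3} of Lemma~\ref{Lemma for monotonicity} (with $q_0=q$, $q_1=q+r$) to sandwich the \emph{quadratic form}
\[
0\geq \left\langle(\Lambda(q+r)-\Lambda(q)-\Lambda'(q)r)F,F\right\rangle\geq \int_\Omega\Big(\frac{q}{q+r}r-r\Big)|S_q(F)|^2\dx=-\int_\Omega\frac{r^2}{q+r}|S_q(F)|^2\dx,
\]
and then invokes symmetry of all three operators to pass from this diagonal bound to the full operator norm. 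Your argument is the standard direct variational one: it is more self-contained (it needs only Lax--Milgram coercivity and the solution map bound, not Lemma~\ref{Lemma for monotonicity}), it yields the off-diagonal remainder formula $\int_\Omega r\,w\,S_q(G)\dx$ explicitly, and it would work verbatim for non-symmetric problems. The paper's approach, by contrast, is shorter given that the monotonicity lemma is already established, and it makes transparent that Fr\'echet differentiability here is essentially a repackaging of the two-sided monotonicity bounds---a connection that becomes useful immediately afterwards in Remark~\ref{remark:monotonicity_with_Frechet}.
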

\begin{proof}
Let $q\in L^\infty_+(\Omega)$. $\Lambda'(q)$ is a linear bounded operator since $S_q$ is linear and bounded, cf.~Lemma \ref{lemma:forward_theory}. 
For sufficiently small $r\in L^\infty(\Omega)$, so that $q+r\in L^\infty_+(\Omega)$, we obtain from the monotonicity relations \eqref{monotone relation 1} and \eqref{monotone relation 3} in Lemma \ref{Lemma for monotonicity} that for all $F\in H(\Omega_{e})$,
\begin{align*}
0 \geq \left\langle \left(\Lambda(q+r)-\Lambda(q)-\Lambda'(q)r\right) F,F\right\rangle  
\geq \int_{\Omega}\left( \dfrac{q}{q+r}r - r\right) |u_{q}|^{2} dx,
\end{align*}
where $u_q=S_q(F)$.

Using that $\Lambda(q)$, $\Lambda(q+r)$, and $\Lambda'(q)r$ are symmetric operators, it follows that
\begin{align*}
&\lefteqn{ \| \Lambda(q+r)-\Lambda(q)-\Lambda'(q)r\|_{\mathcal{L}(H(\Omega_{e}),H(\Omega_{e})^*)} }\\
=& \sup_{\| F\|_{H(\Omega_{e})} =1 } 
\left| \left\langle \left(\Lambda(q+r)-\Lambda(q)-\Lambda'(q)r\right) F,F\right\rangle \right|\\
 \leq &\sup_{\| F\|_{H(\Omega_{e})} =1 } \int_{\Omega}\left| \dfrac{q}{q+r}r - r\right| |u_{q}|^{2} dx
\leq \left\| \frac{r^2}{q+r} \right\|_{L^\infty(\Omega)} \sup_{\| F\|_{H(\Omega_{e})} =1 } \| S_q(F) \|_{L^2(\Omega)}^2\\
\leq &\left\| r \right\|_{L^\infty(\Omega)} \left\| \frac{r}{q+r} \right\|_{L^\infty(\Omega)} \| S_q\|_{\mathcal L(H(\Omega_{e}),H^s(\mathbb R^n)) },
\end{align*}
which shows 
\begin{equation*}
\lim_{\| r\|_{L^\infty(\Omega)} \to 0} \frac{\| \Lambda(q+r)-\Lambda(q)-\Lambda'(q)r\|_{\mathcal{L}(H(\Omega_{e}),H(\Omega_{e})^*)}}{\| r \|_{L^\infty(\Omega)}}=0.
\end{equation*}
\end{proof}

\begin{remark}\label{remark:monotonicity_with_Frechet}
Using the Fr\'echet derivative, the monotonicity relations \eqref{monotone relation 1} and \eqref{monotone relation 3} in Lemma \ref{Lemma for monotonicity}
can be written as follows. For all $q_0,q_1 \in L^\infty_+(\Omega)$
\begin{align*}
\Lambda'(q_0)(q_1-q_0) \geq \Lambda(q_1)-\Lambda(q_{0})\geq \Lambda'(q_0)\left( \frac{q_0}{q_1} (q_1-q_0) \right).
\end{align*}
\end{remark}

We also have an analogue of the monotonicity result in Theorem \ref{Theorem equivalent monotonicity}.
\begin{theorem}\label{thm:converse_mon_frechet}
Let $n\in \N$, $\Omega\subseteq\mathbb{R}^{n}$ be a bounded open set, and $0<s<1$. Then for all $q\in L^\infty_+(\Omega)$ and $r_0,r_1\in L^\infty(\Omega)$,
\[
r_0\leq r_1 \quad \text{ if and only if } \quad \Lambda'(q)r_0\leq \Lambda'(q)r_1.
\]
\end{theorem}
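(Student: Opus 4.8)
The plan is to mimic the structure of the proof of Theorem \ref{Theorem equivalent monotonicity}, exploiting that $\Lambda'(q)$ is explicitly given by the quadratic form $\langle (\Lambda'(q)r)F,F\rangle = \int_\Omega r\, |S_q(F)|^2\,dx$. The "easy'' direction is monotonicity: if $r_0\leq r_1$ almost everywhere then for every $F\in H(\Omega_e)$ we have $\langle (\Lambda'(q)(r_1-r_0))F,F\rangle = \int_\Omega (r_1-r_0)|S_q(F)|^2\,dx\geq 0$, so $\Lambda'(q)r_0\leq \Lambda'(q)r_1$. This is immediate and requires no further input.

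For the converse, I would argue by contradiction: suppose $r_0\leq r_1$ fails on a set of positive measure. Then there is $\delta>0$ and a measurable set $M\subseteq\Omega$ with $|M|>0$ on which $r_0-r_1\geq\delta$. Now apply the localized potentials from Corollary \ref{cor:localized_potentials} to the potential $q$ itself (note $q\in L^\infty_+(\Omega)$, so the hypotheses are met), choosing any nonempty open $O\subseteq\Omega_e$: this yields $F^k\in C_c^\infty(O)$ with corresponding solutions $u^k=S_q(F^k)$ satisfying $\|u^k\|_{L^2(M)}^2\to\infty$ and $\|u^k\|_{L^2(\Omega\setminus M)}^2\to 0$. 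Then
\[
\langle (\Lambda'(q)(r_1-r_0))F^k,F^k\rangle = \int_\Omega (r_1-r_0)|u^k|^2\,dx \leq \|r_1-r_0\|_{L^\infty(\Omega\setminus M)}\|u^k\|_{L^2(\Omega\setminus M)}^2 - \delta\|u^k\|_{L^2(M)}^2 \to -\infty,
\]
which contradicts $\Lambda'(q)r_0\leq \Lambda'(q)r_1$. Hence $r_0\leq r_1$ almost everywhere.

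The argument is essentially a direct transcription of the earlier proof, with the one simplification that here the quadratic form is \emph{linear} in the coefficient perturbation, so no further monotonicity estimates (the $q_0/q_1$ refinements) are needed — only the basic identity for $\Lambda'(q)$ from Lemma \ref{lemma:Frechet_derivative} and the localized potentials. The main (mild) point to be careful about is that the localized potentials in Corollary \ref{cor:localized_potentials} require an exterior open set $O$; since no such set is specified in the statement, I would simply fix an arbitrary nonempty open $O\subseteq\Omega_e$ at the start (noting $\Omega_e$ is nonempty and open because $\Omega$ is bounded), and the conclusion is independent of that choice. No serious obstacle is expected; this is a short corollary-style proof.
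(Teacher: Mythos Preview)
Your proposal is correct and matches the paper's own proof essentially verbatim: the paper dispatches the forward direction by the explicit formula for $\Lambda'(q)$ from Lemma~\ref{lemma:Frechet_derivative}, and for the converse simply says ``the same localized potentials argument as in the proof of Theorem~\ref{Theorem equivalent monotonicity}'' --- which is exactly the contradiction argument you wrote out. Your added remarks (linearity removing the need for the $q_0/q_1$ refinements, and fixing an arbitrary open $O\subseteq\Omega_e$) are accurate and appropriate.
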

\begin{proof}
If $r_0\leq r_1$ then $\Lambda'(q)r_0\leq \Lambda'(q)r_1$ follows immediately from the characterization 
of $\Lambda'(q)$ in Lemma \ref{lemma:Frechet_derivative}. The converse follows from the same localized potentials argument
as in the proof of Theorem \ref{Theorem equivalent monotonicity}.
\end{proof}

Note that this implies uniqueness of the linearized fractional Calder\'on problem:
\begin{corollary}
Let $n\in \N$, $\Omega\subseteq\mathbb{R}^{n}$ be a bounded open set, and $0<s<1$. 
For all $q\in L^\infty_+(\Omega)$, the Fr\'echet derivative $\Lambda'(q)$ is injective, i.e.
\begin{equation*}
\Lambda'(q) r=0 \quad \text{ if and only if } \quad r=0.
\end{equation*}
\end{corollary}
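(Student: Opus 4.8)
The plan is to deduce injectivity directly from the if-and-only-if monotonicity relation for the linearized operator in Theorem~\ref{thm:converse_mon_frechet}, together with the linearity of $\Lambda'(q)$ established in Lemma~\ref{lemma:Frechet_derivative}. There is no genuine obstacle here; the statement is a formal corollary, and the only thing to check is that the two-sided comparison argument is applied correctly.

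First I would note that $\Lambda'(q)$ is linear, so $\Lambda'(q)0 = 0$. This immediately gives the ``if'' direction: $r=0$ implies $\Lambda'(q)r=0$. For the ``only if'' direction, suppose $\Lambda'(q)r=0$. Then both $\Lambda'(q)r \leq 0 = \Lambda'(q)0$ and $\Lambda'(q)r \geq 0 = \Lambda'(q)0$ hold in the sense of quadratic forms. Applying Theorem~\ref{thm:converse_mon_frechet} once with the pair $(r_0,r_1)=(r,0)$ and once with $(r_0,r_1)=(0,r)$ yields $r\leq 0$ and $r\geq 0$ almost everywhere in $\Omega$, hence $r=0$ in $L^\infty(\Omega)$.

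Concretely, the proof reads: \emph{If $r=0$, then $\Lambda'(q)r=0$ by linearity of $\Lambda'(q)$, cf.\ Lemma~\ref{lemma:Frechet_derivative}. Conversely, if $\Lambda'(q)r=0$, then $\Lambda'(q)r\leq \Lambda'(q)0$ and $\Lambda'(q)0\leq \Lambda'(q)r$, so that Theorem~\ref{thm:converse_mon_frechet} yields $r\leq 0$ and $0\leq r$ almost everywhere in $\Omega$, i.e.\ $r=0$.} The main ``work'' has already been done in Theorem~\ref{thm:converse_mon_frechet} (and ultimately in the localized potentials construction of Corollary~\ref{cor:localized_potentials}), so this corollary is essentially a one-line consequence.
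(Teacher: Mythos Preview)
Your proof is correct and matches the paper's approach exactly: the paper simply states that the result ``follows immediately from Theorem~\ref{thm:converse_mon_frechet},'' and your two-sided comparison $r\leq 0$, $0\leq r$ is precisely the intended one-line unpacking of that statement.
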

\begin{proof}
This follows immediately from Theorem~\ref{thm:converse_mon_frechet}.
\end{proof}

\subsection{Reconstructing the support of a coefficient change}

In this subsection, let $n\in \N$, $\Omega\subseteq\mathbb{R}^{n}$ be a bounded open set, and $0<s<1$.
As in the introduction, let $q_0\in L^\infty_+(\Omega)$ denote a known reference coefficient, and $q_1\in L^\infty_+(\Omega)$
denote an unknown coefficient function that differs from the reference value $q_0$ in certain regions. 
We aim to find these anomalous regions (or scatterers), i.e., the support of $q_1-q_0$, from the difference of
the Dirichlet-to-Neumann-operators $\Lambda(q_1)-\Lambda(q_0)$.

To that end, we introduce, for a measurable subset $M\subseteq \Omega$, the testing operator 
$\mathcal{T}_{M}:H(\Omega_{e})\to H(\Omega_{e})^{*}$ by 
setting $T_M:=\Lambda'(q_0)\chi_M$. i.e.,
\begin{equation}
\label{TB operator}
\left\langle \mathcal{T}_{M}F,G\right\rangle :=\int_{M} S_{q_0}(F) S_{q_0}(G) dx
\quad \text{ for all } F,G\in H(\Omega_{e}),
\end{equation}
where, as in Lemma~\ref{lemma:Frechet_derivative}, $S_{q_0}:\ H(\Omega_{e})\to H^s(\mathbb R^n)$, $F\mapsto u_0$,
denotes the solution operator of the reference Dirichlet problem
\[
(-\Delta)^{s}u_0 + q_0 u_0=0  \mbox{ in }\Omega \quad\text{ and } \quad u_0|_{\Omega_{e}}=F.
\]

The following theorem shows that we can find the support of $q-q_0$ by shrinking closed sets, cf.\ \cite{harrach2013monotonicity,garde2019regularized}.
\begin{theorem}\label{thm:support_from_closed_sets}
For each closed subset $C\subseteq \Omega$, 
\begin{align*}
\mathrm{supp}(q_1-q_0)\subseteq C \quad \text{ if and only if } \quad \exists \alpha>0:\ 
-\alpha \mathcal{T}_C \leq \Lambda(q_1)-\Lambda(q_0)\leq  \alpha \mathcal{T}_C.
\end{align*}
Hence,
\[
\mathrm{supp}(q_1-q_0)=\bigcap \{ C\subseteq \Omega \text{ closed}:\ \exists \alpha>0:\ 
-\alpha \mathcal{T}_C \leq \Lambda(q_1)-\Lambda(q_0)\leq  \alpha \mathcal{T}_C\}.
\]
\end{theorem}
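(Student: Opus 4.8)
The plan is to prove the two-sided equivalence in Theorem~\ref{thm:support_from_closed_sets} and then obtain the intersection formula as an immediate corollary. For the direction ``$\mathrm{supp}(q_1-q_0)\subseteq C$ implies the operator inequalities,'' I would start from the monotonicity relations in Lemma~\ref{Lemma for monotonicity}, written in the Fr\'echet-derivative form of Remark~\ref{remark:monotonicity_with_Frechet}, namely
\begin{align*}
\Lambda'(q_0)(q_1-q_0)\geq \Lambda(q_1)-\Lambda(q_0)\geq \Lambda'(q_0)\left(\tfrac{q_0}{q_1}(q_1-q_0)\right).
\end{align*}
Since $q_1-q_0$ and $\tfrac{q_0}{q_1}(q_1-q_0)$ both vanish outside $D:=\mathrm{supp}(q_1-q_0)\subseteq C$, and both are bounded in absolute value by $\alpha:=\left\| q_1-q_0\right\|_{L^\infty(\Omega)}\left(1+\left\|\tfrac{q_0}{q_1}\right\|_{L^\infty(\Omega)}\right)$ times $\chi_C$ (using $L^\infty_+$ bounds on $q_0,q_1$), the monotonicity of $r\mapsto \Lambda'(q_0)r$ from Theorem~\ref{thm:converse_mon_frechet} (or directly from the integral formula for $\Lambda'(q_0)$ in Lemma~\ref{lemma:Frechet_derivative}) gives $-\alpha\mathcal{T}_C\leq \Lambda'(q_0)(q_1-q_0)$ and $\Lambda'(q_0)(\tfrac{q_0}{q_1}(q_1-q_0))\leq \alpha\mathcal{T}_C$. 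Chaining these with the displayed inequalities yields the claimed two-sided bound.

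For the converse, ``$\exists\alpha>0:\ -\alpha\mathcal{T}_C\leq \Lambda(q_1)-\Lambda(q_0)\leq \alpha\mathcal{T}_C$ implies $\mathrm{supp}(q_1-q_0)\subseteq C$,'' I would argue by contraposition: suppose $D=\mathrm{supp}(q_1-q_0)\not\subseteq C$. Then, since $C$ is closed, there is an open ball $B\subseteq\Omega\setminus C$ on which $q_1-q_0$ does not vanish a.e., so there exist $\delta>0$ and a measurable set $M\subseteq B$ of positive measure with $|q_1-q_0|\geq\delta$ on $M$; passing to a smaller $M$ I may assume $q_1-q_0\geq\delta$ on $M$ (the case $q_1-q_0\leq-\delta$ is handled symmetrically by using the other half of the two-sided bound). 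Now I invoke Corollary~\ref{cor:localized_potentials} for the reference coefficient $q_0$ to get a sequence $F^k\in C_c^\infty(O)$ whose solutions $u_0^k=S_{q_0}(F^k)$ satisfy $\left\|u_0^k\right\|_{L^2(M)}^2\to\infty$ while $\left\|u_0^k\right\|_{L^2(\Omega\setminus M)}^2\to 0$. Because $M\cap C=\emptyset$, the testing term is controlled:
\begin{align*}
\left\langle \mathcal{T}_C F^k,F^k\right\rangle=\int_C |u_0^k|^2\,\dx\leq \left\|u_0^k\right\|_{L^2(\Omega\setminus M)}^2\to 0,
\end{align*}
whereas the lower monotonicity relation \eqref{monotone relation 3} gives
\begin{align*}
\left\langle(\Lambda(q_1)-\Lambda(q_0))F^k,F^k\right\rangle\geq \int_\Omega \tfrac{q_0}{q_1}(q_1-q_0)|u_0^k|^2\,\dx\geq c\,\delta\left\|u_0^k\right\|_{L^2(M)}^2 - C'\left\|u_0^k\right\|_{L^2(\Omega\setminus M)}^2\to +\infty,
\end{align*}
with $c=\mathrm{essinf}(q_0/q_1)>0$ and $C'$ absorbing the (bounded, wrong-sign) contribution from $\Omega\setminus M$. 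This contradicts $\Lambda(q_1)-\Lambda(q_0)\leq\alpha\mathcal{T}_C$ for any fixed $\alpha>0$, proving the contrapositive.

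Finally, the intersection formula follows by a standard argument: on one hand $D=\mathrm{supp}(q_1-q_0)$ is itself a closed subset of $\Omega$ (after intersecting with $\overline\Omega$; note $D\subseteq\Omega$ since $q_1-q_0$ is defined on $\Omega$, and one takes the closure in $\Omega$) satisfying the operator inequalities by the first direction, so $D$ lies in the family and the intersection is $\subseteq D$; on the other hand, every $C$ in the family satisfies $D\subseteq C$ by the second direction, so $D$ is contained in the intersection. Hence equality. The main obstacle is the converse direction, specifically arranging the localized-potentials blow-up so that the ``good'' energy on $M$ dominates: one must choose $M$ inside a ball disjoint from $C$ (using closedness of $C$), handle the sign ambiguity of $q_1-q_0$ by symmetry, and use the uniform positivity $q_0/q_1\geq c>0$ afforded by $q_0,q_1\in L^\infty_+(\Omega)$ to keep the lower bound in \eqref{monotone relation 3} from degenerating; the estimate on $\Omega\setminus M$ must also be shown to stay bounded, which is where Lemma~\ref{Lemma boundedness of u_1 and u_0} or a direct $L^\infty$ bound on $q_1-q_0$ enters.
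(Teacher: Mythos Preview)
Your proposal is correct. It differs from the paper's proof in both directions, though the underlying ingredients are the same.

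For the forward direction, you sandwich $\Lambda(q_1)-\Lambda(q_0)$ directly via Remark~\ref{remark:monotonicity_with_Frechet} between $\Lambda'(q_0)(q_1-q_0)$ and $\Lambda'(q_0)\bigl(\tfrac{q_0}{q_1}(q_1-q_0)\bigr)$, and then use the pointwise bounds $-\alpha\chi_C\leq \tfrac{q_0}{q_1}(q_1-q_0)$ and $q_1-q_0\leq \alpha\chi_C$ together with monotonicity of $r\mapsto \Lambda'(q_0)r$. The paper instead first passes through the full DtN via Theorem~\ref{Theorem equivalent monotonicity} (e.g.\ $\Lambda(q_1)\leq \Lambda(q_0+\alpha\chi_C)$) and then bounds $\Lambda(q_0+\alpha\chi_C)-\Lambda(q_0)$ by $\alpha\mathcal T_C$; for the lower bound it introduces an auxiliary small $\beta>0$ with $q_1\geq q_0+(\beta-q_0)\chi_C$. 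Your route is shorter and avoids the detour through Theorem~\ref{Theorem equivalent monotonicity}.

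For the converse, the paper works modularly: it combines Remark~\ref{remark:monotonicity_with_Frechet} with the if-and-only-if result for the Fr\'echet derivative (Theorem~\ref{thm:converse_mon_frechet}) to deduce the pointwise inequalities $q_1-q_0\geq -\alpha\chi_C$ and $\tfrac{q_0}{q_1}(q_1-q_0)\leq \alpha\chi_C$ on $\Omega$, hence $q_1=q_0$ a.e.\ on $\Omega\setminus C$. You instead unpack Theorem~\ref{thm:converse_mon_frechet} and run the localized-potentials contradiction argument directly. Both are fine; the paper's version is more reusable, yours is self-contained. One minor remark: Lemma~\ref{Lemma boundedness of u_1 and u_0 } is not needed in your argument, since you work only with the $q_0$-solutions $u_0^k$ and an $L^\infty$ bound on $\tfrac{q_0}{q_1}(q_1-q_0)$ suffices to control the $\Omega\setminus M$ contribution; you already note this alternative yourself.
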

\begin{proof}
\begin{enumerate}
\item[(a)] Let $\mathrm{supp}(q_1-q_0)\subseteq C$. Then every sufficiently large $\alpha>0$ 
fulfills 
\[
q_1\leq q_0 + \alpha \chi_C.
\]
Using Theorem \ref{Theorem equivalent monotonicity} and Remark \ref{remark:monotonicity_with_Frechet}, we thus obtain
\begin{align*}
\Lambda(q_1)\leq \Lambda(q_0 + \alpha \chi_C)\leq \Lambda(q_0)+\Lambda'(q_0)\alpha \chi_C = \Lambda(q_0) + \alpha \mathcal{T}_C.
\end{align*}
Moreover, for sufficiently small $\beta>0$ we also have that
\[
q_1\geq q_0 + (\beta-q_0) \chi_C \quad \text{ and } \quad q_0\geq \beta
\] 
and thus (using Theorems \ref{Theorem equivalent monotonicity}, \ref{thm:converse_mon_frechet}, and Remark \ref{remark:monotonicity_with_Frechet})
\begin{align*}
\Lambda(q_1)-\Lambda(q_0)&\geq \Lambda(q_0 + (\beta-q_0) \chi_C)-\Lambda(q_0)\\
&\geq \Lambda'(q_0)\left( \frac{q_0}{q_0 + (\beta-q_0) \chi_C} (\beta-q_0) \chi_C\right)\\
&\geq - \Lambda'(q_0)\left( \frac{q_0^2}{q_0 + (\beta-q_0) \chi_C} \chi_C\right)\\
&\geq -\dfrac{1}{\beta } \| q_0 \|^2_{L^\infty(\Omega)} \Lambda'(q_0) \chi_C,
\end{align*}
which shows that 
\[
\Lambda(q_1)-\Lambda(q_0)\geq -\alpha \mathcal{T}_C
\]
is also fulfilled for sufficiently large $\alpha>0$.
\item[(b)] To show the converse implication, let $\alpha>0$ fulfill 
\[
-\alpha \mathcal{T}_C \leq \Lambda(q_1)-\Lambda(q_0)\leq  \alpha \mathcal{T}_C.
\]
Then we obtain using Remark \ref{remark:monotonicity_with_Frechet}
\begin{align*}
\Lambda'(q_0)(-\alpha\chi_C)&=-\alpha \mathcal{T}_C \leq \Lambda(q_1)-\Lambda(q_0)\leq \Lambda'(q_0)(q_1-q_0),
\end{align*}
so that it follows from Theorem \ref{thm:converse_mon_frechet} that
\[
q_1-q_0\geq -\alpha\chi_C,
\]
and in particular $q_1-q_0\geq 0$ almost everywhere on $\Omega\setminus C$.

Likewise we obtain using Remark \ref{remark:monotonicity_with_Frechet}
\begin{align*}
\Lambda'(q_0)(\alpha\chi_C)&=\alpha \mathcal{T}_C \geq \Lambda(q_1)-\Lambda(q_0)\geq \Lambda'(q_0)\left( \frac{q_0}{q_1} (q_1-q_0) \right),
\end{align*}
so that it follows from Teorem \ref{thm:converse_mon_frechet} that
\[
\frac{q_0}{q_1} (q_1-q_0)\leq \alpha\chi_C.
\]
Since $q_1,q_0\in L^\infty_+(\Omega)$ this yields that $q_1-q_0\leq 0$ almost everywhere on $\Omega\setminus C$.
Hence, $q_1=q_0$ almost everywhere in the open set $\Omega\setminus C$ and thus $\mathrm{supp}(q_1-q_0)\subseteq C$.
\end{enumerate}
\end{proof}

In the definite case that either $q_1\geq q_0$ or $q_1\leq q_0$ holds almost everywhere in $\Omega$, 
we can also use the union of small test balls to characterize the so-called \emph{inner support} of $q_1-q_0$. The inner support $\mathrm{inn\,supp}(r)$ of a measurable function $r:\ \Omega\to \mathbb{R}$ is defined as the union of all open sets $U$ on which the essential infimum of $|\kappa|$ is positive, cf.\ \cite[Section 2.2]{harrach2013monotonicity}. 

\begin{theorem}\label{thm:support_from_open_balls}
\begin{enumerate}
\item[(a)] Let $q_1\leq q_0$. For every open set $B\subseteq \Omega$ and every $\alpha>0$
\begin{enumerate}
\item[(1)] $q_1\leq q_0-\alpha \chi_B$ implies $\Lambda(q_1) \leq \Lambda(q_0)-\alpha\mathcal{T}_{B}$.
\item[(2)] $\Lambda(q_1) \leq \Lambda(q_0)-\alpha\mathcal{T}_{B}$ implies $B\subseteq \mathrm{inn\,supp}(q_1-q_0)$.
\end{enumerate}
Hence,
\begin{align*}
\mathrm{inn\,supp}(q_1-q_0)=
\bigcup \{B\subseteq \Omega \text{ open ball}:\ \exists \alpha>0: \Lambda(q_1) \leq \Lambda(q_0)-\alpha\mathcal{T}_{B}\}.
\end{align*}
\item[(b)] Let $q_1\geq q_0$. For every open set $B\subseteq \Omega$ and every $\alpha>0$
\begin{enumerate}
\item[(1)] $q_1\geq q_0+\alpha \chi_B$ implies $\Lambda(q_1) \geq \Lambda(q_0)+\tilde\alpha\mathcal{T}_{B}$ 
with $\tilde\alpha:=\frac{\inf(q_0)\alpha}{\inf(q_0)+\alpha}$.
\item[(2)] $\Lambda(q_1) \geq \Lambda(q_0)+\alpha\mathcal{T}_{B}$ implies $B\subseteq \mathrm{inn\,supp}(q-q_0)$.
\end{enumerate}
Hence,
\begin{align*}
\mathrm{inn\,supp}(q_1-q_0)=\bigcup \{B\subseteq \Omega \text{ open ball}:\ \exists \alpha>0: \Lambda(q_1) \geq \Lambda(q_0)+\alpha\mathcal{T}_{B}\}.
\end{align*}
\end{enumerate}
\end{theorem}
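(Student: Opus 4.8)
The plan is to prove the four implications (a)(1), (a)(2), (b)(1), (b)(2) one at a time; the two union characterizations of $\mathrm{inn\,supp}(q_1-q_0)$ then follow formally. Here I would first record the elementary fact that for a measurable function $r$ one has $\mathrm{inn\,supp}(r)=\bigcup\{B\subseteq\Omega\text{ open ball}:\ \exists\,\alpha>0,\ |r|\geq\alpha\text{ a.e.\ on }B\}$ (every open set is a union of balls, and on a ball contained in such a set the essential infimum of $|r|$ is still positive). Consequently, in the definite case of part (a) the condition ``$B$ is an open ball with $|q_1-q_0|\geq\alpha$ a.e.\ on $B$'' is equivalent to ``$q_1\leq q_0-\alpha\chi_B$'', and in part (b) it is equivalent to ``$q_1\geq q_0+\alpha\chi_B$''.

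For the forward implications (a)(1) and (b)(1) I would only combine results already established. In (a)(1), $q_1\leq q_0-\alpha\chi_B$ together with $q_1\in L^\infty_+(\Omega)$ forces $q_0-\alpha\chi_B\in L^\infty_+(\Omega)$, so Theorem~\ref{Theorem equivalent monotonicity} gives $\Lambda(q_1)\leq\Lambda(q_0-\alpha\chi_B)$, while the upper estimate in Remark~\ref{remark:monotonicity_with_Frechet} applied to the pair $(q_0,\,q_0-\alpha\chi_B)$ gives $\Lambda(q_0-\alpha\chi_B)\leq\Lambda(q_0)+\Lambda'(q_0)(-\alpha\chi_B)=\Lambda(q_0)-\alpha\mathcal{T}_B$. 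In (b)(1), Theorem~\ref{Theorem equivalent monotonicity} gives $\Lambda(q_1)\geq\Lambda(q_0+\alpha\chi_B)$, and the lower estimate in Remark~\ref{remark:monotonicity_with_Frechet} for $(q_0,\,q_0+\alpha\chi_B)$ gives $\Lambda(q_0+\alpha\chi_B)-\Lambda(q_0)\geq\Lambda'(q_0)\bigl(\tfrac{q_0}{q_0+\alpha\chi_B}\alpha\chi_B\bigr)$; since $t\mapsto t\alpha/(t+\alpha)$ is increasing, $\tfrac{q_0}{q_0+\alpha\chi_B}\alpha\chi_B\geq\tilde\alpha\chi_B$ a.e., and Theorem~\ref{thm:converse_mon_frechet} upgrades this to $\Lambda'(q_0)\bigl(\tfrac{q_0}{q_0+\alpha\chi_B}\alpha\chi_B\bigr)\geq\tilde\alpha\mathcal{T}_B$.

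For the converse implications (a)(2) and (b)(2) I would argue by contraposition with localized potentials, exactly as in the proof of Theorem~\ref{Theorem equivalent monotonicity}. Assume $B\not\subseteq\mathrm{inn\,supp}(q_1-q_0)$; then there is a point of $B$ outside the inner support and hence an open ball $B'\subseteq B$ on which $|q_1-q_0|$ has zero essential infimum, so for every $\varepsilon>0$ the set $M_\varepsilon:=\{x\in B':\ |q_1-q_0|(x)<\varepsilon\}$ has positive measure. Let $(F^k)$ be a localized-potentials sequence for the reference potential $q_0$ concentrating on $M_\varepsilon$ (Corollary~\ref{cor:localized_potentials}), $u_0^k:=S_{q_0}(F^k)$, so that $\|u_0^k\|^2_{L^2(M_\varepsilon)}\to\infty$ and $\|u_0^k\|^2_{L^2(\Omega\setminus M_\varepsilon)}\to0$. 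In case (b)(2) ($q_1\geq q_0$), the monotonicity bound \eqref{monotone relation 1} and $M_\varepsilon\subseteq B$ yield
\[
\langle(\Lambda(q_1)-\Lambda(q_0))F^k,F^k\rangle-\alpha\langle\mathcal{T}_BF^k,F^k\rangle\leq\int_\Omega(q_1-q_0)|u_0^k|^2\dx-\alpha\int_{M_\varepsilon}|u_0^k|^2\dx,
\]
and splitting the first integral over $M_\varepsilon$ (where $q_1-q_0<\varepsilon$) and over $\mathrm{supp}(q_1-q_0)\setminus M_\varepsilon\subseteq\Omega\setminus M_\varepsilon$ (where $q_1-q_0\leq\|q_1-q_0\|_{L^\infty}$ and the $L^2$-mass of $u_0^k$ vanishes) bounds the right-hand side by $(\varepsilon-\alpha)\|u_0^k\|^2_{L^2(M_\varepsilon)}+\|q_1-q_0\|_{L^\infty}\|u_0^k\|^2_{L^2(\Omega\setminus M_\varepsilon)}$, which tends to $-\infty$ once $\varepsilon<\alpha$; this contradicts $\Lambda(q_1)\geq\Lambda(q_0)+\alpha\mathcal{T}_B$. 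Case (a)(2) ($q_1\leq q_0$) is identical except that the needed lower bound for $\langle(\Lambda(q_1)-\Lambda(q_0))F^k,F^k\rangle$ must come from \eqref{monotone relation 3}, which gives $\langle(\Lambda(q_1)-\Lambda(q_0))F^k,F^k\rangle\geq-\tfrac{\|q_0\|_{L^\infty}}{\inf(q_1)}\int_\Omega(q_0-q_1)|u_0^k|^2\dx$; the same splitting with a sufficiently small $\varepsilon$ forces $\langle(\Lambda(q_1)-\Lambda(q_0))F^k,F^k\rangle+\alpha\langle\mathcal{T}_BF^k,F^k\rangle\to+\infty$, contradicting $\Lambda(q_1)\leq\Lambda(q_0)-\alpha\mathcal{T}_B$.

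The two union formulas then follow at once: if $x\in\mathrm{inn\,supp}(q_1-q_0)$ there is an open ball $B\ni x$ with $|q_1-q_0|\geq\alpha$ a.e.\ on $B$, hence (by (a)(1), resp.\ (b)(1)) $B$ belongs to the corresponding right-hand union; conversely any ball in the right-hand union is contained in $\mathrm{inn\,supp}(q_1-q_0)$ by (a)(2), resp.\ (b)(2). I expect the only genuinely delicate point to be the localized-potentials step for (a)(2)/(b)(2): one must extract from the failure of $B\subseteq\mathrm{inn\,supp}(q_1-q_0)$ arbitrarily small sublevel sets $M_\varepsilon$ of $|q_1-q_0|$ of positive measure inside $B$, and then balance $\varepsilon$ against the given $\alpha$ so that the divergent term $\|u_0^k\|^2_{L^2(M_\varepsilon)}$ wins; everything else is bookkeeping with Theorems~\ref{Theorem equivalent monotonicity} and~\ref{thm:converse_mon_frechet}, Remark~\ref{remark:monotonicity_with_Frechet}, and the inequalities of Lemma~\ref{Lemma for monotonicity}.
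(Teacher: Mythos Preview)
Your argument is correct in all four parts, and the union formulas follow exactly as you say. However, your route for the converse implications (a)(2) and (b)(2) differs from the paper's. The paper does \emph{not} re-run a localized potentials argument; instead it uses the monotonicity sandwich of Remark~\ref{remark:monotonicity_with_Frechet} to place $\Lambda(q_1)-\Lambda(q_0)$ between two images of $\Lambda'(q_0)$, and then invokes Theorem~\ref{thm:converse_mon_frechet} (the if-and-only-if order relation for the Fr\'echet derivative) as a black box to read off a \emph{pointwise} inequality. For instance, in (b)(2) one gets $\alpha\Lambda'(q_0)\chi_B\leq\Lambda(q_1)-\Lambda(q_0)\leq\Lambda'(q_0)(q_1-q_0)$, hence $\alpha\chi_B\leq q_1-q_0$ a.e., which immediately gives a uniform lower bound $q_1-q_0\geq\alpha$ on all of $B$; similarly (a)(2) yields $q_0-q_1\geq\tfrac{\inf(q_1)}{\sup(q_0)}\alpha$ on $B$. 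This is shorter, avoids the sublevel-set construction $M_\varepsilon$ and the balancing of $\varepsilon$ against $\alpha$, and is in fact slightly more quantitative than what the theorem states. Your direct localized-potentials argument is essentially a reproof of Theorem~\ref{thm:converse_mon_frechet} tailored to this situation; it works, but since that theorem has already packaged the localized-potentials step, the paper simply cites it.

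For (a)(1) your detour through $\Lambda(q_0-\alpha\chi_B)$ is also fine, though the paper again proceeds more directly via $\Lambda(q_1)-\Lambda(q_0)\leq\Lambda'(q_0)(q_1-q_0)\leq-\alpha\Lambda'(q_0)\chi_B$, using Theorem~\ref{thm:converse_mon_frechet} for the second inequality. Your (b)(1) matches the paper's argument.
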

\begin{proof}
\begin{enumerate}
\item[(a)] If $q_1\leq q_0-\alpha \chi_B$, then we obtain using Theorem \ref{thm:converse_mon_frechet}, and Remark \ref{remark:monotonicity_with_Frechet} that
\begin{align*}
\Lambda(q_1)-\Lambda(q_0)&\leq \Lambda'(q_0)(q_1-q_0)\leq -\alpha \Lambda'(q_0)\chi_B=-\alpha \mathcal{T}_{B}.
\end{align*}
On the other hand, if $\Lambda(q) \leq \Lambda(q_0)-\alpha\mathcal{T}_{B}$ then we obtain from Remark \ref{remark:monotonicity_with_Frechet} that
\begin{align*}
-\alpha \Lambda'(q_0)\chi_B=-\alpha\mathcal{T}_{B}\geq \Lambda(q_1)-\Lambda(q_0)\geq 
\Lambda'(q_0)\left( \frac{q_0}{q_1} (q_1-q_0) \right)
\end{align*}
so that it follows from Theorem \ref{thm:converse_mon_frechet} that
\[
-\alpha \chi_B \geq \frac{q_0}{q_1} (q_1-q_0).
\]
Hence, $q_0-q_1\geq \frac{\inf(q_1)}{\sup(q_0)}\alpha$ almost everywhere on $B$ and thus $B\subseteq \mathrm{inn\,supp}(q_1-q_0)$.
\item[(b)] If $q_1\geq q_0+\alpha \chi_B$, then we obtain using Theorems \ref{Theorem equivalent monotonicity}, \ref{thm:converse_mon_frechet}, and Remark \ref{remark:monotonicity_with_Frechet} that
\begin{align*}
\Lambda(q_1)-\Lambda(q_0)&\geq \Lambda(q_0+\alpha \chi_B)-\Lambda(q_0)\\
&\geq \Lambda'(q_0)\left( \frac{q_0}{q_0+\alpha \chi_B} \alpha \chi_B\right)= \Lambda'(q_0)\left(\left( 1 - \frac{\alpha}{q_0+\alpha }\right) \alpha \chi_B\right)\\
&\geq \Lambda'(q_0)\left(\left( 1 - \frac{\alpha }{\inf(q_0)+\alpha }\right) \alpha \chi_B\right)= \frac{\inf(q_0)\alpha}{\inf(q_0)+\alpha}\mathcal{T}_{B}.
\end{align*}
On the other hand, if $\Lambda(q_1) \geq \Lambda(q_0)+\alpha\mathcal{T}_{B}$ then we obtain from Remark \ref{remark:monotonicity_with_Frechet} that
\begin{align*}
\alpha \Lambda'(q_0)\chi_B=\alpha\mathcal{T}_{B}\leq \Lambda(q_1) - \Lambda(q_0) \leq \Lambda'(q_0)(q_1-q_0),
\end{align*}
so that it follows from Theorem \ref{thm:converse_mon_frechet} that
\[
\alpha \chi_B \leq q_1-q_0,
\]
and thus $B\subseteq \mathrm{inn\,supp}(q_1-q_0)$.
\end{enumerate}
\end{proof}

\section{Discussion and Outlook}

We have shown an if-and-only-if monotonicity relation between a positive potential $q\in L_+^\infty(\Omega)$ in the fractional Schr\"odinger equation, and the associated Dirichlet-to-Neumann operator $\Lambda(q)$ (cf.~Theorem~\ref{Theorem equivalent monotonicity})
\begin{equation*}
q_{0}\leq q_{1}\mbox{ if and only if }\Lambda(q_0)\leq\Lambda(q_1).
\end{equation*}
From this we obatined a constructive uniqueness result for the Calderón problem for
the fractional Schrödinger equation. The potential is uniquely determined by the simple reconstruction formula
(cf.~Corollary~\ref{cor:Calderon})
\begin{equation*}
q(x)=\sup\left\{ \psi(x):\mbox{ $\psi$ positive (density one) simple function, } \Lambda(\psi)\leq\Lambda(q)\right\}.
\end{equation*}

Let us give some remarks on a possible practical implementation of our results. First of all, let us stress that
the localized potentials used in this work can be created with Dirichlet data supported in arbitrarily small open subsets $\emptyset\neq O\subseteq \Omega_e$, cf.~Corollary~\ref{cor:localized_potentials}. Hence, all results in this work remain valid if the full data DtN is replaced by the partial data DtN
\[
\Lambda(q):\ H_0^s(O)\to H^{-s}(O),
\]
where $H_0^s(O)$ is the closure of $C_c^\infty(O)$ in $H^s(\R^n)$, and $H^{-s}(O):=H_0^s(O)'$. 

For a numerical implementation, one could choose a family of characteristic functions $\chi_1,\ldots,\chi_M$ for disjoint density one sets (e.g., a pixel partition) $P_1,\ldots,P_M\subseteq \Omega$, $M\in \N$, 
and determine 
\[
\alpha_m:=\sup\{ \alpha\in \R:\ \Lambda(\alpha_m \chi_m)\leq \Lambda(q)\}.
\]
Then, $\psi=\sum_{m=1}^M \alpha_m \chi_m$ is the largest piecewise-constant function (on the given partition) with $\psi\leq q$. 
Analogously, one could obtain a piecewise-constant function approximation $q$ from above.
A numerical implementation of this approach would be computationally rather expensive as it requires solving the fractional Schrödinger equation for a large number of sets $P_m$ and contrast levels $\alpha$ (though these solutions could be precomputed in advance).

A computationally more efficient approach can be used for detecting regions where the potential $q$ differs from a known reference function $q_0$. The support of this change can be determined by shrinking closed sets according to the formula (cf.~Theorem~\ref{thm:support_from_closed_sets})
\[
\mathrm{supp}(q-q_0)=\bigcap \{ C\subseteq \Omega \text{ closed}:\ \exists \alpha>0:\ 
-\alpha \mathcal{T}_C \leq \Lambda(q)-\Lambda(q_0)\leq  \alpha \mathcal{T}_C\},
\]
where the operator $\mathcal{T}_C$ can be calculated from integrating the solution of the fractional Schr\"odinger equation for the reference potential $q_0$ over the set $C$, and no other PDE solutions are required for this approach. Moreover, the inner support of the potential change can be calculated by comparing $\Lambda(q)-\Lambda(q_0)$ with $T_B$ for open balls $B$, cf.~Theorem~\ref{thm:support_from_open_balls}. 

Algorithms based on linearized monotonicity tests have been successfully applied to the standard Laplacian case ($s=1$), cf.~the works cited in the introduction. Among these works, let us mention the recent papers 
\cite{garde2017convergence,harrach2018monotonicity} that show reconstructions on simulated and real-life measurement data,
and discuss practical implementation issues and the regularization of measurement errors. 

For the standard Laplacian case, monotonicity-based reconstruction methods have recently been extended to the Schr\"odinger (or Helmholtz) equation with general (not-necessarily positive) potential function $q\in L^\infty(\Omega)$, cf.~\cite{harrach2018monotonicity,griesmaier2018monotonicity}, and monotonicity arguments were also used to prove stability results, cf.~\cite{harrach2019global,harrach2019uniqueness,seo2018learning,eberle2019lipschitz}.
The recent follow-up paper \cite{harrach2019monotonicity} extends the results to general potentials $q\in L^\infty(\Omega)$
in the fractional diffusion case and proves Lipschitz stability with finitely many measurements.

\section*{Acknowledgment}

Yi-Hsuan Lin is partially supported by MOST of Taiwan under the project
160-2917-I-564-048.

\bibliographystyle{abbrv}
\bibliography{ref}

\end{document}